\theoremstyle{definition}
\newtheorem{theorem}{Theorem}[section]
\newtheorem{prop}[theorem]{Proposition}
\newtheorem{lemma}[theorem]{Lemma}
\newtheorem{fact}[theorem]{Fact}
\newtheorem{conjecture}[theorem]{Conjecture}
\newtheorem{claim}[theorem]{Claim}
\newenvironment{poc}{\begin{proof}[Proof of the claim]}{\end{proof}}
\newcommand*{\eqdef}{\stackrel{\mbox{\normalfont\tiny def}}{=}} 
\newcommand*{\maximal}{critical}
\newcommand{\ex}{\mathrm{ex}}
\newcommand{\cH}{\mathcal{H}}
\newcommand{\cT}{\mathcal{T}}
\newcommand{\boxp}{\mathbin{\square}}
\author{
	Zichao Dong\thanks{Extremal Combinatorics and Probability Group (ECOPRO), Institute for Basic Science (IBS), Daejeon, South Korea. Supported by the Institute for Basic Science (IBS-R029-C4). \texttt{$\{$zichao,jungao,hongliu$\}$@ibs.re.kr}. }
	\and Jun Gao\footnotemark[1]
	\and Hong Liu\footnotemark[1]
}
\date{}
\title{Bipartite Tur\'an problems via graph gluing}
\begin{document}
	
	\maketitle
	\begin{abstract}
		For graphs $H_1$ and $H_2$, if we glue them by identifying a given pair of vertices $u \in V(H_1)$ and $v \in V(H_2)$, what is the extremal number of the resulting graph $H_1^u \odot H_2^v$? In this paper, we study this problem and show that interestingly it is equivalent to an old question of Erd\H{o}s and Simonovits on the Zarankiewicz problem. When $H_1, H_2$ are copies of a same bipartite graph $H$ and $u, v$ come from a same part, we prove that $\operatorname{ex}(n, H_1^u \odot H_2^v) = \Theta \bigl( \operatorname{ex}(n, H) \bigr)$. As a corollary, we provide a short self-contained disproof of a conjecture of Erd\H{o}s, which was recently disproved by Janzer. 
	\end{abstract}
	
	\textbf{MSC 2020 codes: 05C35, 05D40.} 
	
	\section{Introduction} \label{sec:intro}
	
	The extremal number of a graph $H$, denoted by $\ex(n, H)$, is the maximum number of edges in an $n$-vertex $H$-free graph. Determining this function is one of the most important topics in extremal graph theory, originating from the classical work of Mantel~\cite{1907Mantel} and Tur\'an~\cite{1941Turan}. For any graph $H$ and $n \to \infty$, the celebrated Erd\H{o}s--Stone--Simonovits theorem~\cite{1966ESS,1946ErdosBAMS} states that 
	\[
	\ex(n, H) = \Bigl( 1- \frac{1}{\chi(H)-1} + o(1) \Bigr) \cdot \frac{n^2}{2}, 
	\]
	where $\chi(H)$ is the chromatic number of $H$. This result asymptotically resolves the problem for every non-bipartite $H$. However, for a bipartite $H$, it only shows $\ex(n, H) = o(n^2)$. Complete comprehension of the behavior of $\ex(n,H)$ for bipartite $H$ remains elusive. The order of magnitude is known only for a handful of bipartite graphs. For more details, we refer the readers to the F\"{u}redi--Simonovits survey~\cite{2013FSsurvey}. 
	
	As every graph can be built from smaller ones, a natural approach is to study how graph operations could affect the extremal function for bipartite graphs. In this paper, we investigate the following simple gluing operation: given bipartite $H_1, H_2$ and vertices $u \in V(H_1), \, v \in V(H_2)$, denote by $H_1^u \odot H_2^v$ the graph built from gluing $H_1$ and $H_2$ by identifying $u$ and $v$. Throughout this paper, the big-O, little-o, and big-Theta notations always hide the $n \to \infty$ limit process. When a gluing $H_1^u \odot H_2^v$ is considered, we implicitly assume that the vertex sets $V(H_1), V(H_2)$ are disjoint. 
	We propose the following conjecture. 
	
	\begin{conjecture} \label{conj: mian}
		If $H_1, H_2$ are bipartite graphs and $u \in V(H_1), \, v \in V(H_2)$, then
		\[
		\ex(n, H_1^u \odot H_2^v) = \Theta\bigl( \ex(n, H_1) + \ex(n, H_2) \bigr). 
		\]
	\end{conjecture}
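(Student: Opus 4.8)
Observe first that the lower bound needs no work: both $H_1$ and $H_2$ occur as subgraphs of $H := H_1^u \odot H_2^v$, so every $H_i$-free graph is $H$-free, and hence $\ex(n,H) \ge \max\{\ex(n,H_1),\ex(n,H_2)\} \ge \tfrac{1}{2}\bigl(\ex(n,H_1)+\ex(n,H_2)\bigr)$. Thus the whole content of the conjecture is the upper bound $\ex(n,H) = O\bigl(\ex(n,H_1)+\ex(n,H_2)\bigr)$, and the plan is to attack that.

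For a host graph $G$, call a vertex $w$ \emph{$(H_1,u)$-good} if $G$ contains a copy of $H_1$ with $u \mapsto w$, and define \emph{$(H_2,v)$-good} analogously. Two observations drive the argument. First, if $G$ is $H$-free, then no vertex $w$ that is simultaneously $(H_1,u)$-good and $(H_2,v)$-good can have an $(H_1,u)$-rooted copy and an $(H_2,v)$-rooted copy meeting only in $w$, since their union would contain $H$; fixing one $(H_2,v)$-rooted copy at $w$, its $\abs{V(H_2)}-1$ non-root vertices therefore hit every $(H_1,u)$-rooted copy at $w$. Second, the subgraph of $G$ induced on the vertices that are \emph{not} $(H_1,u)$-good contains no copy of $H_1$ at all — a copy there would make one of its own vertices $(H_1,u)$-good — so it has at most $\ex(n,H_1)$ edges, and symmetrically for $H_2$. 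Partitioning $V(G)$ into the non-$(H_1,u)$-good set, the doubly-good set $W$, and the remaining singly-good vertices, the second observation already covers $O(\ex(n,H_1)+\ex(n,H_2))$ of the edges; what is left to control are the edges incident to $W$ and the edges running between the $(H_1,u)$-good and the $(H_2,v)$-good classes.

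To finish, I would aim to bound these remaining edges by $O(\ex(n,H_1)+\ex(n,H_2))$ via a rooted K\H{o}v\'ari--S\'os--Tur\'an-type estimate, double-counting rooted copies of $H_1$ and of $H_2$ against the vertices forced, by the first observation, into their pairwise intersections; the crude move of deleting, for each $w \in W$, its $\abs{V(H_2)}-1$ witnesses is not enough, as it may destroy $\Theta(n)$ edges per vertex, so a weighted charging version would be needed. This residual estimate is exactly the old question of Erd\H{o}s and Simonovits on the Zarankiewicz problem, and I expect it to be the genuine obstacle — open in full generality.

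In the special case $H_1 = H_2 = H$ with $u,v$ lying in a common part of $H$, however, the bipartition of $H$ forces the two ``$B$-halves'' of $H^u \odot H^v$ to be vertex-disjoint, which supplies precisely the extra room needed and lets one bypass the general obstacle by a direct argument: after cleaning $G$ to minimum degree $\gg \ex(n,H)/n$ while still leaving a copy of $H$, fix such a copy (whose image of $u$ then automatically has degree $\gg \ex(n,H)/n$), and count copies of $H$ rooted at that vertex by supersaturation / dependent random choice; one can then re-embed a second copy of $H$ on fresh $B$-vertices rooted at the same vertex, producing $H^u \odot H^v \subseteq G$. This yields $\ex(n, H^u \odot H^v) = O(\ex(n,H))$, and together with the trivial lower bound, $\ex(n, H^u \odot H^v) = \Theta(\ex(n,H))$.
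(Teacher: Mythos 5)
Two preliminary points you get right: the lower bound really is trivial (both $H_i$ embed in $H_1^u \odot H_2^v$), and the paper itself treats the general statement as an open conjecture, proving only (a) its equivalence to the Erd\H{o}s--Simonovits Zarankiewicz \Cref{conj:erdos} (via \Cref{thm: main 1}) and (b) the same-part same-graph special case (\Cref{thm: main 2}). Your recognition that the general case hits exactly the Zarankiewicz obstacle matches the content of \Cref{thm: main 1}, so up to that point you are aligned with the paper.

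Where the proposal goes wrong is the sketch for the special case $H_1 = H_2 = H$, $u, v$ in a common part. The plan ``fix one copy of $H$, use supersaturation / dependent random choice to count copies rooted at the image of $u$, re-embed a second copy on fresh $B$-vertices'' has several unrepaired gaps. First, a high-degree vertex need not root \emph{any} copy of $H$ at $u$, let alone many; global supersaturation says nothing about a prescribed root, and dependent random choice produces large sets with rich common neighbourhoods, not $H$-copies anchored at a chosen vertex. Second, even with many rooted copies, nothing forces one of them to avoid the first copy's $A$-vertices as well: your disjointness claim is phrased only about the $B$-halves, but the two copies must also be disjoint on $A \setminus \{u\}$, which the bipartition alone does not grant you. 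The paper's actual mechanism (\Cref{lem:key}) sidesteps both issues in a way you don't anticipate: it randomly splits the $B$-side into $L_1, L_2$ so every $R$-vertex sees each half roughly evenly (\Cref{lem: good partition}), then \emph{greedily} extracts $H_1$-copies in $G[L_1, R]$ whose root images $u_1,\dots,u_k$ form a set $X$ carrying a constant fraction of $e(G_1)$ (\Cref{cl:X}, via the maximal-degree selection rule), uses the good partition to transfer that density to $G[L_2, X]$, and finally finds an $H_2$-copy living \emph{entirely inside} $G[L_2, X]$ with $v$ landing in some $u_t$. Disjointness then falls out structurally: the $L$-sides are in $L_1$ versus $L_2$, and the $R$-side of the second copy lies in $X$, which meets $S_t$ only in $u_t$. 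None of this is a refinement of your sketch — it is a different argument, and without the popular-root-set idea and the two-sided partition, the re-embedding step you rely on does not go through.
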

	
	The following special case of~\Cref{conj: mian} when $H_1$ and $H_2$ are isomorphic is particularly interesting, as we shall see that it is closely related to the famous Zarankiewicz problem. 
	
	\begin{conjecture}\label{conj: mian2}
		If $H_1, H_2$ are two copies of a bipartite graph $H$ and $u \in V(H_1), \, v \in V(H_2)$, then
		\[
		\ex(n, H_1^u \odot H_2^v) = \Theta\bigl( \ex(n, H) \bigr). 
		\]
	\end{conjecture}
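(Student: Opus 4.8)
The plan is to prove the two bounds separately. For the lower bound, $H\cong H_1$ is a subgraph of $F:=H_1^u\odot H_2^v$, so any $H$-free graph is $F$-free and $\ex(n,F)\ge\ex(n,H)$. (If $H$ is a forest then so is $F$ and both sides equal $\Theta(n)$, provided $H$ has a component with at least two edges; so from now on assume $H$ contains a cycle, in which case $\ex(n,H)\ge\ex(n,C_{2g})=\omega(n)$ where $C_{2g}$ is the shortest cycle of $H$.) The content is the matching upper bound $\ex(n,F)=O\bigl(\ex(n,H)\bigr)$, and here I would follow the case the paper actually treats: $H_1\cong H\cong H_2$ and $u,v$ correspond to one and the same vertex of $H$, still denoted $u$, lying in the part $A$ --- the general same-part case follows with one further pigeonhole step, whereas allowing $u,v$ in different parts looks substantially harder.

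For the upper bound, let $G$ be an $n$-vertex $F$-free graph with $e(G)\ge C\cdot\ex(n,H)$ for a constant $C=C(H)$ to be taken large; I want to exhibit a copy of $F$, a contradiction. Step one is a sparse supersaturation estimate: the set $B$ of edges of $G$ lying in at least one copy of $H$ satisfies $e(G)-\abs{B}\le\ex(n,H)$, since $G-B$ is $H$-free; hence $\abs{B}\ge(C-1)\ex(n,H)$, and double counting edges against copies of $H$ gives at least $\abs{B}/e(H)$ labelled copies of $H$ in $G$. Consequently some vertex $w$ is the image of $u$ in at least $t:=(C-1)\ex(n,H)/\bigl(e(H)\,n\bigr)$ of these copies, which after deleting $w$ become at least $t$ copies of $J:=H-u$ in $G-w$, each embedding the marked set $S:=N_H(u)$ into $N_G(w)$. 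If two of these copies of $J$ were vertex-disjoint, then together with $w$ and the two stars from $w$ to the images of $S$ they would span a copy of $F$; hence the $t$ copies of $J$ pairwise intersect. Fixing one of them, which spans $v(H)-1$ vertices, every other meets it, so some vertex $x\ne w$ lies in at least $t/(v(H)-1)$ of them; a final pigeonhole over which vertex $z\in V(H)\setminus\{u\}$ maps to $x$ then yields $\Omega\bigl(\ex(n,H)/n\bigr)$ labelled copies of $H$ in $G$, all rooted at the pair $(w,x)$ via $(u,z)$.

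The step I expect to be the main obstacle is ruling out such a concentrated family of rooted copies: one must show it cannot coexist with $e(G)\ge C\cdot\ex(n,H)$. The naive continuation --- delete $x$ as well and recurse on the smaller graph $H-u-z$ --- does not close, because after passing to the sub-family through $x$ there is no longer any reason for a further common vertex to appear; a different mechanism is needed to trade concentration for extra edge-density, ultimately forcing a dense $H$-free remainder of $G$ and contradicting the choice of $C$. This is precisely where the problem becomes sensitive to the order of magnitude of $\ex(n,H)$ and meets the Zarankiewicz-type question of Erd\H{o}s and Simonovits mentioned in the introduction; it is also where the hypothesis that $u$ and $v$ come from the same part is used essentially, since then the marked set $S=N_H(u)$ lies entirely in the part $B$, so the stars emanating from the apex vertices land in a controlled part of the host graph and a K\H{o}v\'ari--S\'os--Tur\'an-style count over these neighbourhoods becomes available. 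I would expect essentially all the difficulty of the theorem to lie in this concentration-elimination step; everything preceding it is routine.
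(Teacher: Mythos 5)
Your argument stalls at exactly the step you flag, and the gap is real. Once you have many labelled copies of $H$ all sending $u\mapsto w$ and some second vertex $z\mapsto x$, you have no mechanism to conclude: deleting $x$ and recursing does not reproduce the hypothesis, and there is no K\H{o}v\'ari--S\'os--Tur\'an-type count available at this level of generality. Moreover, the statement you were given is \Cref{conj: mian2}, which the paper leaves open in general; you have sensibly restricted to the same-part case, namely \Cref{thm: main 2}, but even there the concentration-elimination step cannot be patched along the route you sketch.

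The paper's proof of \Cref{thm: main 2} sidesteps this issue by a fundamentally different device, encapsulated in \Cref{lem:key}. After passing to a bipartite host with bipartition $(L,R)$ and large minimum degree, one takes a Chernoff-based ``good partition'' of $L$ into $L_1,L_2$ so that every $r\in R$ has roughly half its neighbours in each half (\Cref{lem: good partition}). Copies $F_1,\dots,F_k$ of $H_1$ are then chosen greedily inside $G[L_1,R]$ with the distinguished vertex $u_i\in R$, at each step picking the available copy whose $u$-vertex has maximum degree in $G[L_1,R]$; this degree ordering, rather than a pigeonhole on a single root, is what forces the collected set $X=\{u_1,\dots,u_k\}$ to meet each $F_i$ only in $u_i$ (\Cref{cl:X}). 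A double count then shows $X$ supports a constant fraction of $e(G[L_1,R])$, hence of $e(G[L_2,R])$ by the good partition, so the density hypothesis produces a copy of $H_2$ inside $G[L_2,R]$ with its $v$-vertex in $X$. The two embeddings meet only at that one $R$-vertex, since one lives on the $L_1$ side and the other on the $L_2$ side and can only intersect in $R$, where \Cref{cl:X} pins the intersection to a single point. In short, disjointness away from the glued vertex comes for free from the bipartite split, not from eliminating a concentrated family of rooted copies; that splitting mechanism is the idea your proposal is missing.
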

	
	\newpage
	
	To explain the connection to the Zarankiewicz problem, define $\ex(n, m, H)$ as the maximum number of edges of a subgraph of the complete bipartite graph $K_{n, m}$ that does not contain $H$ as a subgraph. The Zarankiewicz problem is an asymmetric version: given a bipartition $(L, R)$ of a bipartite graph $H$, its Zarankiewicz number, denoted by $z(n, m, H[L, R])$, is the maximum number of edges of a subgraph of $K_{n, m}$ that does not contain $H$ as a subgraph with $L$ in the part of size $n$ and $R$ in the part of size $m$. When $n = m$, we write $z(n, n, H) \eqdef z(n, n, H[L, R]) = z(n, n, H[R, L])$ for ease of notations.
	
	It follows directly from the definitions that $z(n, n, H) \ge \ex(n, n, H)$ and $\ex(2n, H) \ge \ex(n, n, H)$. Since every graph contains a balanced bipartite subgraph with at least half of its edges, we have that $\ex (n,n,H) \ge \frac{1}{2} \ex(2n,H)$. For the relationship between $\ex(n, n, H)$ and $z(n, n, H)$, Erd\H os and Simonovits~\cite{1984Simonovits} made the following conjecture (see \cite[Conjecture 2.12]{2013FSsurvey}).
	
	\begin{conjecture}[\cite{1984Simonovits}] \label{conj:erdos}
		If $H$ is a bipartite graph, then
		\[
		z(n, n, H) = \Theta\bigl( \ex(n, n, H) \bigr). 
		\]
	\end{conjecture}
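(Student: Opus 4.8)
We do not resolve \Cref{conj:erdos}; the plan is instead to reduce it to the gluing questions above and to pinpoint the single step where genuinely new input is needed. The inequality $z(n,n,H) \ge \ex(n,n,H)$ is immediate from the definitions --- a subgraph of $K_{n,n}$ with no copy of $H$ at all is in particular free of $H$ with $L$ in the first part and $R$ in the second --- so the whole content of the conjecture lies in the reverse bound $z(n,n,H) = O\bigl( \ex(n,n,H) \bigr)$.

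Fix a bipartition $H = (L,R)$ and assume $H$ is connected (the disconnected case reduces to this one). The first step is a construction. Take an $n \times n$ bipartite graph $G$ with parts $(A,B)$ that is extremal for the Zarankiewicz problem, so $G$ is free of $H$ with $L \hookrightarrow A$ and $R \hookrightarrow B$ and $e(G) = z(n,n,H)$. Let $H_1, H_2$ be disjoint copies of $H$, under isomorphisms sending the bipartition $(L,R)$ to $(L_1,R_1)$ and to $(L_2,R_2)$ respectively, and set $H^\ast \eqdef H_1^{u} \odot H_2^{v}$, where $u \in L_1$ and $v \in R_2$ are identified into a single vertex $w$. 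The second step is to observe that $G$, now regarded as an ordinary graph on the $2n$ vertices of $A \cup B$, is $H^\ast$-free: since $H^\ast$ is connected, any copy of it inside the bipartite graph $G$ must respect the bipartition $(A,B)$, and the unique proper $2$-colouring of $H^\ast$ has colour classes $\{w\} \cup (L_1 \setminus u) \cup (R_2 \setminus v)$ and $R_1 \cup L_2$; if the first class maps into $A$ then $L_1 \hookrightarrow A$ and $R_1 \hookrightarrow B$, a forbidden copy of $H$, while if it maps into $B$ then $L_2 \hookrightarrow A$ and $R_2 \hookrightarrow B$, again forbidden. Hence $z(n,n,H) = e(G) \le \ex(2n, H^\ast)$.

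The third step closes the reduction, assuming the gluing bound $\ex(n, H^\ast) = O\bigl( \ex(n,H) \bigr)$, which is precisely an instance of \Cref{conj: mian2} with $H_1 = H_2 = H$. Combined with $\ex(n,H) = \Theta\bigl( \ex(n,n,H) \bigr)$ --- itself a consequence of the inequalities recorded just before \Cref{conj:erdos} and the standard fact that $\ex(m,H) / \binom{m}{2}$ is non-increasing in $m$ --- this yields $z(n,n,H) = O\bigl( \ex(n,n,H) \bigr)$, completing the argument. (The converse implication, that \Cref{conj: mian2} follows from \Cref{conj:erdos}, can be obtained by passing from a $\bigl( H_1^u \odot H_2^v \bigr)$-free graph to an auxiliary bipartite graph on two copies of its vertex set and applying a Zarankiewicz bound there, so the two conjectures are in fact equivalent; we do not need this direction here.)

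The main obstacle is the third step. In $H^\ast$ the two copies of $H$ are glued at $u \in L_1$ and $v \in R_2$, which lie in \emph{opposite} parts, so this is the ``different part'' case of the gluing conjecture and is not covered by our main result, which establishes only the same-part instances of \Cref{conj: mian2}. One consequence does come for free: when $H$ admits an automorphism interchanging its two parts --- as do $K_{t,t}$, even cycles, and hypercubes --- the graph $H^\ast$ is isomorphic to a same-part gluing of two copies of $H$, our main result applies, and \Cref{conj:erdos} holds for such $H$ unconditionally. Pushing the gluing bound $\ex(n, H^\ast) = O(\ex(n,H))$ to arbitrary cross-part gluings --- equivalently, forbidding those embeddings of $H^\ast$ whose two halves point in opposite directions across the shared vertex --- is where ideas genuinely beyond the single-part argument seem to be required.
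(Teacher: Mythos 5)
The statement you were asked about is a \emph{conjecture} --- the paper does not prove it, and indeed cannot without new ideas, since the ``only if'' direction of even the weaker \Cref{conj:r=2} remains open. You correctly recognise this and instead give the reduction ``\Cref{conj: mian2} $\implies$ \Cref{conj:erdos}'', which is precisely the one-sentence implication the paper records just before \Cref{thm: main 1}: for connected bipartite $H$ with bipartition $(A,B)$ and any $u \in A$, $v \in B$, one has $z(n,n,H) \le \ex(n,n,H_1^u \odot H_2^v)$. Your Step~2 fills in the detail the paper leaves implicit --- identifying the bipartition of $H^\ast$ and checking that a Zarankiewicz-extremal bipartite graph is $H^\ast$-free under both orientations --- and this check is correct. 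Your Step~3 (converting $\ex(2n,H^\ast) = O(\ex(n,H))$ into $z(n,n,H) = O(\ex(n,n,H))$ via the averaging inequalities recorded in the paper and the monotonicity of $\ex(m,H)/\binom{m}{2}$) is likewise sound. In short, you reproduce the easy direction of the paper's \Cref{thm: main 1}; the paper's actual work on this circle of conjectures lies in the converse direction ``\Cref{conj:erdos} $\implies$ \Cref{conj: mian}'', which uses \Cref{lem:key} and which you only gesture at in parentheses.

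One small caveat on your closing observation. When $H$ admits an automorphism swapping its two parts, it is true that $H^\ast$ becomes a same-part gluing and \Cref{thm: main 2} applies. But for such $H$ the conjecture is trivial anyway: a part-swapping automorphism means that forbidding $H$ with $L$ in one side is literally the same as forbidding $H$ outright, so $z(n,n,H) = \ex(n,n,H)$ with no asymptotic slack. So \Cref{thm: main 2} buys nothing new there. The genuinely open territory --- cross-part gluing for asymmetric $H$ --- is exactly where you correctly locate the obstruction, and the paper does not claim to resolve it either.
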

	
	Since the extremal number of the union of two disjoint graphs $H_1$ and $H_2$ has the same order as $\max \bigl\{ \ex(n, H_1), \ex(n, H_2) \bigr\}$, it suffices to consider connected graphs in these conjectures. 
	
	Let $H_1$ and $H_2$ be two copies of a connected bipartite graph $H$ with bipartition $(A,B)$. For any $u \in A$ and $v \in B$, we have $z(n,n, H) \le \ex(n,n,H_1^u\odot H_2^v)$. So, \Cref{conj: mian2} implies \Cref{conj:erdos}. 
	
	Our first result shows that these three conjectures are in fact equivalent to each other.
	
	\begin{theorem} \label{thm: main 1}
		\Cref{conj: mian}, \Cref{conj: mian2}, and \Cref{conj:erdos} are all equivalent. 
	\end{theorem}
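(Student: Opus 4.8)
The plan is to close the cycle of implications in which \Cref{conj: mian} $\Rightarrow$ \Cref{conj: mian2} $\Rightarrow$ \Cref{conj:erdos} $\Rightarrow$ \Cref{conj: mian}. The implication \Cref{conj: mian2} $\Rightarrow$ \Cref{conj:erdos} is exactly the computation in the discussion preceding the theorem, and \Cref{conj: mian} $\Rightarrow$ \Cref{conj: mian2} is immediate upon taking $H_1,H_2$ to be two copies of $H$, since then $\ex(n,H_1)+\ex(n,H_2)=2\ex(n,H)=\Theta(\ex(n,H))$. So the whole content is \Cref{conj:erdos} $\Rightarrow$ \Cref{conj: mian}. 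As before we may assume $H_1,H_2$ are connected (and, to rule out degeneracies, that each has a vertex of degree at least two); fix $u\in V(H_1)$, $v\in V(H_2)$ and set $F\eqdef H_1^u\odot H_2^v$ with glued vertex $w$. If $(P_1,Q_1)$ is the bipartition of $H_1$ with $u\in P_1$ and $(P_2,Q_2)$ that of $H_2$ with $v\in P_2$, then $F$ is bipartite with parts $A_F\eqdef (P_1\setminus\{u\})\cup\{w\}\cup(P_2\setminus\{v\})$ and $B_F\eqdef Q_1\cup Q_2$.

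First I reduce the problem. A bipartite subgraph carries at least half the edges, so $\ex(n,F)\le 2\ex(n,n,F)\le 2z(n,n,F)$, while $F\supseteq H_i$ gives $\ex(n,F)\ge\max\{\ex(n,H_1),\ex(n,H_2)\}$, which already yields the lower bound in \Cref{conj: mian} unconditionally. Assuming \Cref{conj:erdos} for $H_1$ and $H_2$ we get $z(n,n,H_i)=\Theta(\ex(n,n,H_i))=\Theta(\ex(n,H_i))$, the last equality being standard (a random half of the vertices of an $N$-vertex graph spans a constant fraction of its edges, so $\ex(n,n,H_i)\le\ex(2n,H_i)=O(\ex(n,H_i))$). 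Hence \Cref{conj: mian} reduces to the following \emph{unconditional} estimate, valid for the orientation $A_F\to X$:
\[
z(n,n,F)=O\bigl(z(n,n,H_1)+z(n,n,H_2)+n\bigr),
\]
the additive $n$ being harmless since $v(F)\ge 3$ forces $\ex(n,F)=\Omega(n)$ (a disjoint union of cliques of order $v(F)-1$ is $F$-free).

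To prove this estimate, write $m$ for the right‑hand side and let $G\subseteq K_{X,Y}$ with $|X|=|Y|=n$ and $e(G)>Cm$ for a large constant $C=C(H_1,H_2)$; the goal is a copy $\psi_1$ of $H_1$ with $P_1\to X$ and a copy $\psi_2$ of $H_2$ with $P_2\to X$ meeting in exactly one vertex, which plays the role of $\psi_1(u)=\psi_2(v)$. If $H_1$ and $H_2$ are both forests then so is $F$ and all three functions are $\Theta(n)$; so we may assume (relabelling if needed) that $H_1$ contains a cycle, whence $z(n,n,H_1)=\omega(n)$ and $m=\omega(n)$. Let $P\subseteq X$ be the set of vertices occurring as the image of $u$ in some copy of $H_1$ in $G$ with $P_1\to X$, and let $Q\subseteq X$ be the analogue for $H_2$ and $v$. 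Then $G-P$ has no copy of $H_1$ with $P_1$ landing in the $X$‑side (such a copy would exhibit its image of $u$ inside $P$), so $e(G-P)\le z(n,n,H_1)\le m$, and similarly $e(G-Q)\le m$. Setting $W\eqdef P\cap Q$, inclusion–exclusion gives $e(G[W,Y])\ge e(G[P,Y])+e(G[Q,Y])-e(G)\ge e(G)-2m$; in particular $W\ne\emptyset$ and all but $O(m)$ edges of $G$ touch $W$.

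It now suffices to locate $w_0\in W$ that is a \emph{robust} $u$‑root for $H_1$, meaning $G$ contains $\kappa\eqdef\max\{v(H_1),v(H_2)\}$ copies of $H_1$, each with $P_1\to X$ and $u\mapsto w_0$, pairwise meeting only in $w_0$. Granting this: since $w_0\in W\subseteq Q$ there is a copy $\psi_2$ of $H_2$ with $P_2\to X$ and $v\mapsto w_0$, and its $v(H_2)-1<\kappa$ vertices other than $w_0$ destroy at most $v(H_2)-1$ of the $\kappa$ pairwise‑disjoint‑off‑$w_0$ copies of $H_1$; a surviving one is disjoint from $\psi_2$ except at $w_0$, so gluing gives a copy of $F$ with $A_F\to X$, a contradiction. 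The existence of $w_0$ is the crux: as $m=\omega(n)$ and $e(G[W,Y])\ge(C-2)m$, the bipartite graph on $(W,Y)$ lies far above the Zarankiewicz threshold for $H_1$, so by supersaturation it contains many copies of $H_1$ with $P_1\to W$; if every vertex of $W$ were a non‑robust root, each such copy would be pinned down by a constant‑size obstruction at its root, bounding the number of these copies well below what supersaturation guarantees. Making this counting step robust is the one genuinely delicate point — one likely has to pass first to a subgraph of $G[W,Y]$ of large minimum degree and/or iterate the $P$–$Q$ dichotomy inside $G[W,Y]$ — whereas everything else (the reductions, and the observation that $u$‑roots and $v$‑roots cannot be almost disjoint in an edge‑rich graph) is routine. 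Together with the two implications already in hand, this closes the cycle and proves \Cref{thm: main 1}.
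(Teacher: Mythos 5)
Your two easy implications and the reduction framing of ``\Cref{conj:erdos} $\Rightarrow$ \Cref{conj: mian}'' (pass to a bipartite subgraph, use $z(n,n,H_i)=\Theta(\ex(n,H_i))$, dispense with the lower bound and the forest case) all match the paper. The divergence, and the gap, is in the core combinatorial step where one must actually find the glued copy.

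You propose to locate a \emph{robust root} $w_0\in W$: a vertex carrying $\kappa=\max\{v(H_1),v(H_2)\}$ copies of $H_1$ that are pairwise disjoint off $w_0$, and then choose an $H_2$-copy rooted at $w_0$ which dodges all but $v(H_2)-1<\kappa$ of them. The final gluing step is fine \emph{if} such a $w_0$ exists, but your argument for its existence does not close. If $w_0$ is a non-robust root, a maximal family of pairwise-almost-disjoint copies rooted at $w_0$ gives a blocking set $T_{w_0}$ of constant size such that every $H_1$-copy rooted at $w_0$ meets $T_{w_0}\setminus\{w_0\}$. But this does \emph{not} bound the number of $H_1$-copies rooted at $w_0$: in a graph far above the Zarankiewicz threshold, a fixed pair of vertices can lie on arbitrarily many copies of $H_1$, so ``pinned to a constant-size set'' is compatible with having $\omega(1)$ copies at every root. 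Consequently, supersaturation providing $\Omega(m)$ copies of $H_1$ rooted in $W$ does not contradict all roots being non-robust. You flag this yourself as the ``one genuinely delicate point,'' and indeed as written it is a gap; it is also not clear that iterating the $P$--$Q$ dichotomy alone repairs it, since the same objection recurs.

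The paper's \Cref{lem:key} avoids this obstacle by a different mechanism. Rather than demanding $\kappa$ pairwise-disjoint copies at a single root, it greedily builds \emph{one} copy of $H_1$ at each of many \emph{distinct} roots $u_1,\dots,u_k$, forming a root-set $X$. The charging argument (mark as ``used'' only those vertices of each copy with degree at most the root's, and pick each new copy to maximize the root's degree) ensures two things simultaneously: $X\cap S_i=\{u_i\}$, and $X$ carries a constant fraction of the edges. Disjointness of the eventual $H_1$- and $H_2$-copies is then obtained not from a counting/robustness argument but from a random balanced bipartition $(L_1,L_2)$ of the other side (via Chernoff): the $H_1$-copies live in $G^*[L_1,R^*]$, the $H_2$-copy with root $u_t\in X$ is found in $G^*[L_2,X]$, and the only possible overlap is $X\cap S_t=\{u_t\}$. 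This sidesteps exactly the count you were unable to control. To salvage your route you would need a fundamentally new idea for producing a robust root; the paper's proof suggests replacing that goal with the ``many distinct roots plus a random side-split'' strategy instead.
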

	
	Considering \Cref{conj: mian2}, our next result states that if the two vertices we merge come from the same part of the graph, then the conjecture holds. 
	
	\begin{theorem} \label{thm: main 2}
		Let $H_1$ and $H_2$ be two copies of a connected bipartite  graph $H$ with bipartition $(A, B)$. If vertices $u \in V(H_1), \, v \in V(H_2)$ satisfy that $u, v$ are both from $A$ or both from $B$, then
		\[
		\ex(n, H_1^u\odot H_2^v) = \Theta\bigl( \ex(n, H) \bigr). 
		\]
	\end{theorem}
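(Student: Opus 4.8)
The plan is to prove the two bounds separately; the lower bound is trivial and all the work is in the upper bound.

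For the lower bound, note that $H_1\cong H$ is a subgraph of $H_1^u\odot H_2^v$, so every $H$-free graph is $(H_1^u\odot H_2^v)$-free and $\ex(n,H_1^u\odot H_2^v)\ge\ex(n,H)$. If $H$ is a forest then $H_1^u\odot H_2^v$ is a forest too and both sides are $\Theta(n)$, so I may assume $H$ contains a (necessarily even) cycle; then $\ex(n,H)\ge\ex(n,C_{2k})=\omega(n)$, where $2k$ is the shortest cycle length in $H$, so in particular $\ex(n,H)/n\to\infty$.

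For the upper bound I want a constant $C=C(H)$ such that every $n$-vertex graph $G$ with $e(G)\ge C\,\ex(n,H)$ contains $H_1^u\odot H_2^v$. The first step is standard cleaning: $G$ has a subgraph $G'$ with $\delta(G')\ge e(G)/n\ge C\,\ex(n,H)/n\to\infty$, and writing $n'=v(G')\le n$ and using that $\ex(m,H)/\binom m2$ is non-increasing in $m$ — whence $\ex(n',H)\le\tfrac{n'}{n}\,\ex(n,H)$ — one gets $e(G')\ge\tfrac12 n'\delta(G')\ge\tfrac C2\,\ex(n',H)$; restricting to a connected component preserves an inequality of this shape. Thus I may take $G'$ connected, with minimum degree tending to infinity, and such that deleting any $O_H(1)$ vertices from $G'$ still leaves more than $\ex(\cdot,H)$ edges (for $C$ large); in particular $G'$, and all of its large subgraphs, contain many copies of $H$. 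Since $G'$ is $(H_1^u\odot H_2^v)$-free, it now suffices to find two copies $\phi_1,\phi_2$ of $H$ in $G'$ with $\phi_1(u)=\phi_2(v)$ whose images meet only in that vertex: then $\phi_1\cup\phi_2$ is a copy of $H_1^u\odot H_2^v$, a contradiction.

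Producing this pair is the heart of the proof, and the only place the hypothesis $u,v\in A$ is used. My plan is to fix a copy $\phi_1$ of $H$ in $G'$, put $x:=\phi_1(u)$, delete the $v(H)-1$ vertices $\phi_1(V(H))\setminus\{x\}$ (the remaining graph still beats the Tur\'an threshold for $H$, so it is again rich in copies of $H$), and then find inside it a copy $\phi_2$ of $H$ with $\phi_2(v)=x$ — that is, a copy of $H$ meeting $\phi_1(V(H))$ only in $x$ and placing $x$ in the role of $v$. The use of "same part" is that $x$ is \emph{a priori} usable as an $A$-vertex (it is $\phi_1(u)$), and, exploiting the very large minimum degree together with the abundance of copies of $H$ that survive bounded deletions, one should be able to realize $x$ in the role of the other $A$-vertex $v$ as well, after deleting the boundedly many forbidden vertices — choosing $\phi_1$ suitably among the many copies of $H$ so that this transfer goes through. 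This transfer from a "$u$-rooted" to a "$v$-rooted" copy of $H$ at a common vertex is the step I expect to be the main obstacle; it is presumably exactly what fails when $u$ and $v$ come from different parts, which is why only the same-part case is resolved. (One clean shortcut is available in part of the range: gluing at a vertex of $A$ leaves all neighbourhoods of $B$-vertices unchanged, so the maximum degree of a $B$-vertex satisfies $\Delta_B(H_1^u\odot H_2^v)=\Delta_B(H)=:r$; when $\ex(n,H)=\Theta(n^{2-1/r})$ — e.g.\ for complete bipartite $H$ — the K\H{o}v\'ari--S\'os--Tur\'an-type bound already gives $\ex(n,H_1^u\odot H_2^v)=O(n^{2-1/r})=O(\ex(n,H))$, and the real work is confined to the "thin" cases.) Once $\phi_1,\phi_2$ are found, the contradiction with $(H_1^u\odot H_2^v)$-freeness forces $e(G)<C\,\ex(n,H)$, which together with the lower bound yields $\ex(n,H_1^u\odot H_2^v)=\Theta(\ex(n,H))$.
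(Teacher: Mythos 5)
Your lower bound and the preliminary cleaning (reduce to a subgraph of large minimum degree that, even after deleting $O_H(1)$ vertices, still exceeds the Tur\'an threshold for $H$) are fine, and the overall target --- produce $\phi_1,\phi_2$ copies of $H$ meeting only at $\phi_1(u)=\phi_2(v)$ --- is the right one. But the step you yourself flag as ``the main obstacle'' is indeed a genuine gap, and it is precisely where all of the paper's work lives. Fixing a single copy $\phi_1$, putting $x=\phi_1(u)$, deleting the other $v(H)-1$ image vertices, and then invoking ``abundance of copies of $H$ in what remains'' does not give you a copy with $v\mapsto x$: the surviving graph may have $\gg\ex(n,H)$ edges and hence many copies of $H$, yet every one of them might avoid $x$ entirely, and certainly none is forced to place $x$ in the role of $v$. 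The number of edges at a single vertex is at most $n-1$, which is $o(\ex(n,H))$ once $H$ has a cycle, so no supersaturation-at-$x$ argument of the naive form can work. ``Choosing $\phi_1$ suitably among the many copies'' is the whole game, and you have not said how to choose it.

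The paper's resolution (Lemma~\ref{lem:key}) is structurally different from what you propose and is worth contrasting. It first passes to a bipartite host with bipartition $(L,R)$, then randomly splits $L$ into an $\varepsilon$-good partition $(L_1,L_2)$ so that every $R$-vertex has roughly half its neighbours on each side. In $G[L_1,R]$ it runs a greedy algorithm with a degree-maximality tie-break to produce a \emph{sequence} of copies $F_1,\dots,F_k$ of $H_1$ whose $u$-vertices form a set $X\subseteq R$ that carries a constant fraction of the edges into $L_1$; the good partition then transfers this edge-density to $L_2$, so $G[L_2,X]$ is dense enough to contain a copy of $H_2$ with $v\in X$, automatically vertex-disjoint from the relevant $F_t$ outside the shared vertex. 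The ``same part'' hypothesis then enters not as a local re-rooting trick at a single $x$, but at the level of Theorem~\ref{thm: main 2}'s deduction from Lemma~\ref{lem:key}: either every dense induced bipartite piece has $H$ with $u,v$ on the $R$ side (apply the lemma as is), or some dense piece has none, and then connectedness of $H$ forces every dense sub-piece to have $H$ with $u,v$ on the $L$ side, whereupon the lemma applies with the roles of $L$ and $R$ swapped. This dichotomy, and the ``build a large root-set $X$ rather than one root $x$'' idea, are the two missing ingredients in your sketch; without them the proposal does not close.

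Your parenthetical shortcut (degree-based K\H{o}v\'ari--S\'os--Tur\'an bound when $\ex(n,H)=\Theta(n^{2-1/r})$ with $r=\Delta_B(H)$) is correct in that special regime, but it does not touch the generic case, which is the content of the theorem.
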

	
	\Cref{thm: main 2} is related to yet another old conjecture of Erd\H{o}s and Simonovits. For any positive integer $r$, recall that a graph is \emph{$r$-degenerate} if each of its subgraphs has minimum degree at most $r$. Erd\H{o}s and Simonovits~\cite{erdos1988} proposed the conjecture below. 
	
	\begin{conjecture}[\cite{1981ESconj}] \label{conj:r=2}
		For a bipartite $H$, we have $\ex(n, H) = O(n^{3/2})$ if and only if $H$ is $2$-degenerate.    
	\end{conjecture}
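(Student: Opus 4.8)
Since Conjecture~\ref{conj:r=2} is in fact false --- it was disproved by Janzer, and the present paper promises a short self-contained reproof --- the task is really to sketch a \emph{disproof}. I would aim to break the implication ``$H$ is $2$-degenerate $\Rightarrow \ex(n, H) = O(n^{3/2})$'' by producing a bipartite graph that is $2$-degenerate but has extremal number $\omega(n^{3/2})$.

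The first step is to isolate a small bipartite building block $H_0$ that is $2$-degenerate yet already too rich for the standard $n^{3/2}$ ceiling to be forced on it --- in particular $H_0$ should have no vertex cover of size $2$, so that K\H{o}v\'ari--S\'os--Tur\'an does not give $\ex(n, H_0) = O(n^{3/2})$ for free. A natural pool of candidates is subdivisions of dense graphs (the $1$-subdivision, or a longer balanced subdivision, of a clique or of $K_{s,t}$): every proper subdivision is automatically $2$-degenerate --- its subdivision vertices have degree $2$, and the original vertices, being pairwise non-adjacent, cannot sit inside a subgraph of minimum degree $\ge 3$ --- while a subdivision still inherits a lot of the host's structure. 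I would certify $2$-degeneracy of $H_0$ by writing down an elimination ordering that peels off degree-$\le 2$ vertices down to the empty graph and ends at a prescribed vertex $u$ (possible since a $2$-degenerate graph admits such an ordering ending at any vertex of degree $\le 2$). The counterexample itself I would take to be $H \eqdef (H_0)_1^{u}\odot (H_0)_2^{u}$, glued along a vertex $u$ lying in a single fixed part of $H_0$. Gluing two $2$-degenerate graphs at one common vertex, placed last in the concatenated elimination ordering, is again $2$-degenerate, so $H$ is $2$-degenerate; and since $u$ lies in one part, \Cref{thm: main 2} yields $\ex(n, H) = \Theta\bigl(\ex(n, H_0)\bigr)$. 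Thus the whole problem collapses to proving $\ex(n, H_0) = \omega(n^{3/2})$ for one small graph. (Here $H_0$ alone would already witness the failure of \Cref{conj:r=2}; the gluing step mainly shows the phenomenon is stable and makes the extremal bound transparent from the simplest possible block.)

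The core is then the lower bound $\ex(n, H_0) = \omega(n^{3/2})$: an $n$-vertex $H_0$-free graph with $\omega(n^{3/2})$ edges. The plan is a random-algebraic or random-deletion construction --- take a structured host on $n$ vertices with $\gg n^{3/2}$ edges (a random bipartite graph of suitably chosen density, or an incidence/polynomial structure over a finite field), bound the expected number of copies of $H_0$, and delete one edge per copy. Because $H_0$ has bounded size, its degree-$\ge 3$ vertices impose enough constraints that copies of $H_0$ are scarce compared with the edge count, so after deletion one still retains $\omega(n^{3/2})$ edges, and the $2$-degeneracy of $H_0$ is what keeps the analysis of potential copies manageable. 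Combining this with $\ex(n, H) = \Theta(\ex(n, H_0))$ and the $2$-degeneracy of $H$ contradicts \Cref{conj:r=2}.

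I expect the real obstacle to be exactly this step: choosing $H_0$ and certifying $\ex(n, H_0) = \omega(n^{3/2})$, because of a built-in tension --- $H_0$ has to be structurally heavy enough (no size-$2$ cover, degree-$\ge 3$ vertices positioned so that no known general upper bound forces $O(n^{3/2})$) to have a super-$n^{3/2}$ extremal number, yet light enough ($2$-degenerate, bounded size) that a short construction provably avoids it above the $n^{3/2}$ threshold. Everything else --- the degeneracy bookkeeping for the gluing and the appeal to \Cref{thm: main 2} --- is routine by comparison. If this balance turns out easier to strike in the reverse direction, one could instead try to exhibit a bipartite graph that is \emph{not} $2$-degenerate whose extremal number is nonetheless $O(n^{3/2})$, again transferring the bound from a tractable block via \Cref{thm: main 2}; but proving an $O(n^{3/2})$ upper bound for a graph containing a minimum-degree-$\ge 3$ subgraph looks harder, so I would pursue the $2$-degenerate-with-large-extremal-number route first.
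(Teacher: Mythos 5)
There is a genuine gap, and it runs through the heart of your plan. You aim to break the ``if'' direction of \Cref{conj:r=2} by exhibiting a $2$-degenerate bipartite $H_0$ with $\ex(n, H_0) = \omega(n^{3/2})$. That direction is not what the paper disproves --- it is, as the paper states, still a major open problem, and no construction (random-algebraic, polynomial, subdivision-based, or otherwise) is known to break it. Your intuition about subdivisions is also backwards: subdivisions of dense graphs are the canonical examples of graphs with \emph{small} extremal number near $n^{3/2}$ (Conlon--Lee, Janzer), not large; they are tame precisely because they are very sparse bipartite graphs, and that is why they remain consistent with the ``if'' direction.

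The paper's argument goes the opposite way and is conditional: it shows the ``if'' and ``only if'' directions cannot both hold. The key device you miss is the \emph{critical} $r$-degenerate graph --- a bipartite $r$-degenerate graph with exactly one vertex $u$ of degree $r$ and every other vertex of degree $\ge r+1$. Gluing two copies at their degree-$r$ vertices $u, v$ (chosen from the same part) produces $H^* = H_1^u \odot H_2^v$ in which \emph{every} vertex has degree $\ge r+1$: the glued vertex now has degree $2r \ge r+1$, and all others already had degree $\ge r+1$. So $H^*$ has minimum degree $\ge r+1$ and is therefore not $r$-degenerate. Your sentence ``gluing two $2$-degenerate graphs at one common vertex, placed last in the concatenated elimination ordering, is again $2$-degenerate'' is exactly wrong in the critical case: the whole of $H^*$ is a subgraph with no vertex of degree $\le 2$, so no elimination ordering exists. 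Meanwhile \Cref{thm: main 2} gives $\ex(n, H^*) = \Theta(\ex(n, H))$, and the ``if'' direction (which is what makes the disproof conditional, or \Cref{thm: main 3} for $r=2$ to make it unconditional) gives $\ex(n, H) = O(n^{2-1/r})$. Thus $H^*$ is a non-$r$-degenerate bipartite graph with $\ex(n, H^*) = O(n^{2-1/r})$, contradicting the ``only if'' direction. The direction you dismiss as ``harder'' is precisely the one that works, because you never have to prove the $O(n^{2-1/r})$ upper bound from scratch; you either inherit it from the ``if'' assumption, or (for $r=2$) from the explicit critical $2$-degenerate graphs $C^{\Join-}_{2\ell}$ built in the paper.
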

	
	Erd\H{o}s was particularly interested in \Cref{conj:r=2} as he stated the problem several times \cite{1981ESconj,erdos1983,erdos1988,erdos1993}. Moreover, he offered {\$}250 for a proof and {\$}500 for a disproof. \Cref{conj:r=2} has the following extension. 
	
	\begin{conjecture} \label{conj:r}
		For a bipartite $H$, we have $\ex(n, H) = O(n^{2-1/r})$ if and only if $H$ is $r$-degenerate. 
	\end{conjecture}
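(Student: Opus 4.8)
\Cref{conj:r} is a biconditional, and its forward implication --- that every $r$-degenerate bipartite $H$ has $\ex(n,H)=O(n^{2-1/r})$ --- is the notorious problem for which Erd\H{o}s offered a prize; I will not attempt it here. My plan is instead to examine the reverse implication, which I expect to be \emph{false}, and to disprove it: the goal is to exhibit a bipartite graph that is not $r$-degenerate but still satisfies $\ex(n,\cdot)=O(n^{2-1/r})$.

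The mechanism is \Cref{thm: main 2} together with the observation that gluing two copies of a graph at a vertex can strictly \emph{raise} its degeneracy while leaving the Tur\'an exponent untouched: identifying the two copies of $u$ doubles $\deg(u)$, so a subgraph of $F$ that is ``$(r+1)$-regular except at $u$'' is promoted, after gluing, to a genuine subgraph of minimum degree $r+1$. Concretely, I would look for a connected bipartite graph $F$ and a vertex $u$ of $F$ with: (i) $\ex(n,F)=O(n^{2-1/r})$; and (ii) a subgraph $S\subseteq F$ with $u\in V(S)$, $\deg_S(w)\ge r+1$ for every $w\in V(S)\setminus\{u\}$, and $\deg_S(u)\ge\lceil (r+1)/2\rceil$. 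Given such $F$ and $u$, put $H:=F_1^u\odot F_2^u$, gluing the two copies of $F$ at their common vertex $u$, which lies in a single part of $F$, so that \Cref{thm: main 2} applies and yields $\ex(n,H)=\Theta(\ex(n,F))=O(n^{2-1/r})$. On the other hand $S_1^u\odot S_2^u$ is a subgraph of $H$ in which every vertex has degree at least $r+1$: each $w\ne u$ retains the degree it has in its own copy of $S$, while $u$ acquires degree $2\deg_S(u)\ge r+1$. Hence $H$ is not $r$-degenerate, so $H$ refutes the reverse implication of \Cref{conj:r}, and with it \Cref{conj:r} itself.

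The substantive work --- and the step I expect to be the main obstacle --- is producing $F$, because (i) and (ii) pull against each other. All of the routine ways to force (i), namely $F\subseteq K_{r,t}$, or $F$ having a part of maximum degree at most $r$, or $F$ sitting inside a $1$-subdivision, make every subgraph of $F$ too sparse in one part for (ii) to be satisfiable, so none of them is usable. One therefore has to find a less obvious family of bipartite graphs that simultaneously admits a Tur\'an bound of shape $n^{2-1/r}$ and contains a subgraph that is almost $(r+1)$-regular through a single vertex; I would hunt among blow-ups of small dense graphs and among graphs whose extremal number has already been pinned down in the literature (theta-graph-type, book-type, and incidence-type constructions), and arranging (i) and (ii) at once is the heart of the argument. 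The forward implication of \Cref{conj:r} is the other, harder half and is not addressed by this plan.
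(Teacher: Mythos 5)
Your gluing mechanism is exactly the paper's, and the degree bookkeeping in your second paragraph is correct: gluing two copies of $F$ at a single vertex $u$ doubles $\deg(u)$ while fixing the Tur\'an exponent via \Cref{thm: main 2}, and a subgraph $S\subseteq F$ that is nearly $(r+1)$-regular through $u$ becomes, after gluing, a subgraph of genuine minimum degree $r+1$. The gap is the one you flag yourself: you never produce $F$, and without it the plan does not close.

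The insight you are missing is that you should not try to verify condition~(i) unconditionally. \Cref{conj:r} is a biconditional, so it suffices to show the two halves cannot simultaneously hold. Take $F$ to be a bipartite \emph{critical $r$-degenerate} graph, one in which a single vertex $u$ has degree exactly $r$ and every other vertex has degree at least $r+1$. Such an $F$ exists: start from $K_{r,r+1}$ with parts $X$ ($|X|=r$) and $Y$ ($|Y|=r+1$), attach to each of all but one of the $r$-element subsets $S\subseteq Y$ a new vertex $z_S$ with neighborhood $S$, and finally attach a vertex $w$ adjacent to all the $z_S$; the result is bipartite and critical $r$-degenerate with $w$ the unique degree-$r$ vertex. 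Your condition~(ii) now holds with $S=F$ and $\deg_S(u)=r\ge\lceil(r+1)/2\rceil$. For your condition~(i), since $F$ is $r$-degenerate, the bound $\ex(n,F)=O(n^{2-1/r})$ is precisely the content of the ``if'' direction of \Cref{conj:r} --- so \emph{assume} it rather than prove it. Then \Cref{thm: main 2} yields $\ex(n,F_1^u\odot F_2^u)=O(n^{2-1/r})$, yet the glued graph has minimum degree $r+1$ and is therefore not $r$-degenerate, contradicting the ``only if'' direction. This conditional trick is what makes the disproof short and uniform in $r$; the route you sketch --- hunting for an explicit $F$ with an unconditional Tur\'an bound and the right local density --- is genuinely harder, and indeed the paper only achieves it (and only for $r=2$) via the much more involved construction in \Cref{thm: main 3}.
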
 
	
	Historically, the ``if'' direction of \Cref{conj:r} was made by Erd\H{o}s \cite{erdos1967}. As for the other direction, a stronger conjecture that any bipartite $H$ of minimum degree at least $r + 1$ has $\ex(n, H) = \Omega(n^{2-1/r+\varepsilon})$ was proposed by Erd\H{o}s and Simonovits \cite{erdos1993}. The ``only if'' direction was recently disproved by Janzer, first for every $r \ge 3$ in~\cite{janzer_cycle}, and then for $r=2$ in~\cite{janzer23}, via delicate and involved explicit constructions. 
	
	As an application of \Cref{thm: main 2}, we present here a very short disproof of \Cref{conj:r} for all $r \ge 2$ at once by arguing that the ``if'' and the ``only if'' direction cannot hold simultaneously. It is worth mentioning that Janzer's result is significantly stronger, as he constructed, for every $\varepsilon > 0$, a $3$-regular bipartite graph $H$ with $\ex(n, H) = O(n^{4/3 + \varepsilon})$. Call a graph $G$ \emph{\maximal} $r$-degenerate if $G$ is $r$-degenerate in which exactly one vertex is of degree $r$ and every other vertex is of degree at least $r + 1$. 
	
	\begin{proof}[Disproof of Conjecture~\ref{conj:r} assuming \Cref{thm: main 2}]
		We first prove that for every $r \ge 2$, there exists a bipartite {\maximal} $r$-degenerate graph. Construct in three steps: 
		\vspace{-0.5em}
		\begin{itemize}
			\item To start, we take a copy of $K_{r, r+1}$ whose bipartition is $(X, Y)$ such that $|X| = r$ and $|Y| = r+1$. 
			\vspace{-0.5em}
			\item For all but one $r$-element subsets $S \subseteq Y$, we add a unique vertex $z_S$ whose neighborhood is $S$. 
			\vspace{-0.5em}
			\item Finally, we add yet another new vertex $w$ which is adjacent to exactly all those vertices $z_S$. 
		\end{itemize}
		\vspace{-0.5em}
		We obtain a critical $r$-degenerate graph. See Figure~\hyperlink{figone}{1} for the $r = 2$ case. 
		
		\vspace{0.5em}
		\centerline{
			\begin{tikzpicture}[scale=1]
				\clip (-6, -2.5) rectangle (9, 1.5);
				\node[shape = circle,draw = black,fill,inner sep=0pt,minimum size=1.0mm] at (0,0.5) {};
				\node[shape = circle,draw = black,fill,inner sep=0pt,minimum size=1.0mm] at (0,-0.5) {};
				\node[shape = circle,draw = black,fill,inner sep=0pt,minimum size=1.0mm] at (1,-1) {};
				\node[shape = circle,draw = black,fill,inner sep=0pt,minimum size=1.0mm] at (1,0) {};
				\node[shape = circle,draw = black,fill,inner sep=0pt,minimum size=1.0mm] at (1,1) {};
				\node[shape = circle,draw = black,fill,inner sep=0pt,minimum size=1.0mm] at (2,0.5) {};
				\node[shape = circle,draw = black,fill,inner sep=0pt,minimum size=1.0mm] at (2,-0.5) {};
				\node[shape = circle,draw = black,fill,inner sep=0pt,minimum size=1.0mm] at (3,0) {};
				\draw[thick, fill opacity=0.3] 
				(0,0.5) -- (1,-1)-- (0,0.5) -- (1,1)-- (0,0.5) -- (1,0);
				\draw[thick, fill opacity=0.3] 
				(0,-0.5) -- (1,-1)-- (0,-0.5) -- (1,1)-- (0,-0.5) -- (1,0);
				\draw[thick, fill opacity=0.3] 
				(1,1) -- (2,0.5)-- (1,0) -- (2,-0.5)-- (1,-1);
				\draw[thick, fill opacity=0.3] 
				(2,0.5)-- (3,0) -- (2,-0.5);
				\node at (3.25, 0) {$w$};
				\node at (2.5, 0.65) {$z_{\{y_1, y_2\}}$};
				\node at (2.47, -0.78) {$z_{\{y_2, y_3\}}$};
				\node at (1, 1.24) {$y_1$};
				\node at (1, 0.25) {$y_2$};
				\node at (1, -1.22) {$y_3$};
				\node at (-0.25, 0.5) {$x_1$};
				\node at (-0.27, -0.5) {$x_2$};
				\node at (1.5, -2) {\textbf{\hypertarget{figone}{Figure 1}:} A {\maximal} $2$-degenerate graph with $w$ being the unique degree-$2$ vertex. };
			\end{tikzpicture}
		}
		
		Pick a bipartite {\maximal} $r$-degenerate $H$. Assuming the ``if'' direction of \Cref{conj:r=2}, we have $\ex(n, H) = O(n^{2-1/r})$. Take two copies $H_1, H_2$ of $H$ and denote by $u, v$ the unique degree-$r$ vertices of $H_1, H_2$, respectively. Write $H^* \eqdef H_1^u \odot H_2^v$. Then $H^*$ is not $r$-degenerate with $\ex(n, H^*) = O(n^{2-1/r})$ (by \Cref{thm: main 2}), contradicting the ``only if'' direction of \Cref{conj:r=2}. 
	\end{proof}
	
	The ``if'' direction of~\Cref{conj:r=2} remains a major open problem, even for the case $r = 2$: whether every $2$-degenerate bipartite $H$ satisfies $\ex(n, H) = O(n^{3/2})$. All known $2$-degenerate graphs with such properties have at least two vertices of degree at most $2$ (e.g.,~grids \cite{2023BJS} and blow-ups of trees \cite{2022GJNL}). This makes us wonder the following question: does there exist a {\maximal} $2$-degenerate graph $H$ such that $\ex(n, H) = O(n^{3/2})$. Moreover, if such graphs do exist, then we can avoid using the ``if'' direction of~\Cref{conj:r=2} in the disproof above. In our next result, we construct infinitely many such graphs, answering this question in the affirmative. 
	
	\begin{theorem}\label{thm: main 3}
		There are infinitely many {\maximal} $2$-degenerate graphs $H$ such that $\ex(n, H) = O(n^{3/2})$. 
	\end{theorem}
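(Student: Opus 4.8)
The plan is to exhibit one concrete {\maximal} $2$-degenerate bipartite graph $H_0$ with $\ex(n,H_0)=O(n^{3/2})$, and then manufacture infinitely many from it by iterated gluing through \Cref{thm: main 2}. For $H_0$ I would take the {\maximal} $2$-degenerate graph in Figure~1 (the $r=2$ output of the construction used in the disproof above): it is connected, bipartite, $2$-degenerate, and has $w$ as its unique degree-$2$ vertex, and — what matters for the bound — it is built from a $K_{2,3}$ by three successive additions of degree-$2$ vertices, so it carries a $K_{2,t}$-scaffold. Any {\maximal} $2$-degenerate bipartite graph with such a scaffold would do equally well; for example, starting from $K_{2,2t}$, adding a common neighbour to each of $t$ disjoint pairs of degree-$2$ vertices, and then collapsing the remaining degree-$2$ vertices down to one by a short ``binary-tree'' cap of new degree-$2$ vertices (keeping track of the two colour classes so the last added vertex is legal) already produces infinitely many candidates.

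The heart of the matter is $\ex(n,H_0)=O(n^{3/2})$, which I would prove by a direct embedding argument. Suppose $F$ is $H_0$-free with $e(F)\ge Cn^{3/2}$ for a large constant $C$. After passing to a nonempty subgraph $F_0$ with $\delta(F_0)\ge c\sqrt n$ (obtained by deleting low-degree vertices, which costs $o(n^{3/2})$ edges), the average codegree in $F_0$ is a large constant, so there is a pair $\{h_1,h_2\}$ whose common neighbourhood $S$ has size at least any prescribed constant $t$, giving a $K_{2,t}$-scaffold. Repeated cherry-counts inside $S$ (using $\delta(F_0)\ge c\sqrt n$) then locate, one after another and always avoiding the $O(1)$ vertices already chosen: two vertices of $S$ with a common neighbour outside $\{h_1,h_2\}$, a third vertex of $S$ together with another such common neighbour, and finally a common neighbour of those two common neighbours — i.e.\ exactly an embedded copy of $H_0$, contradicting $H_0$-freeness. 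I expect the main obstacle to be precisely this bookkeeping: one must choose the constants (first $t$, then $C$, relative to $c$) so that each of the finitely many greedy steps retains enough slack; the individual estimates, however, are routine cherry-counts.

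Finally, to pass from $H_0$ to an infinite family I apply \Cref{thm: main 2}. Let $H$ be {\maximal} $2$-degenerate and bipartite with parts $(A,B)$ and unique degree-$2$ vertex $p$, say $p\in A$. If $|A|=1$ then $\deg_H(p)=|B|=2$ and both vertices of $B$ have degree $1$, which is impossible; so $|A|\ge 2$, and since every vertex other than $p$ has degree at least $3$ there is some $q\in A\setminus\{p\}$ with $\deg_H(q)\ge 3$. Let $H_1,H_2$ be copies of $H$ and put $H':=H_1^{p}\odot H_2^{q}$. Gluing two vertices of $A$ keeps $H'$ connected and bipartite; gluing two $2$-degenerate graphs at a single vertex keeps $H'$ $2$-degenerate (build $H_1$, then add $V(H_2)\setminus\{q\}$ along a $2$-degeneracy order of $H_2$); and the identified vertex now has degree $2+\deg_H(q)\ge 5$ while every other degree is unchanged, so $H'$ has exactly one degree-$2$ vertex, namely the copy of $p$ in $H_2$. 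Thus $H'$ is again {\maximal} $2$-degenerate and bipartite, with $\ex(n,H')=\Theta\bigl(\ex(n,H)\bigr)$ by \Cref{thm: main 2}. Starting from $H_0$ and iterating, the number of vertices strictly increases at each step, so we obtain infinitely many pairwise non-isomorphic {\maximal} $2$-degenerate graphs $H$ with $\ex(n,H)=O(n^{3/2})$.
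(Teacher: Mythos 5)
There is a fundamental gap in the middle step of your argument, and it is in fact an open problem the paper explicitly poses. You propose to take the Figure~\hyperlink{figone}{1} graph (call it $H_0$) and establish $\ex(n,H_0)=O(n^{3/2})$ via a ``routine'' cherry-counting embedding. But the paper explicitly raises \emph{as an open question} whether $\ex(n, H_0) = O(n^{3/2})$ holds --- see the remark just before the statement of the theorem. Your sketch does not resolve it: after you locate $h_1, h_2$ with a large common neighborhood $S$ (a $K_{2,t}$-scaffold), you need to find $y_1,y_2\in S$ whose common neighborhood contains a vertex \emph{outside} $\{h_1,h_2\}$, then another such pair $y_2,y_3$, and then a further common neighbor. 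But in a $K$-almost-regular graph of density $\Theta(\sqrt{n})$, the pairwise codegrees inside $S$ need not exceed the trivial lower bound $2$ coming from $h_1,h_2$ themselves; the average codegree over \emph{all} pairs being a constant does not give control over the specific pairs lying in $S$. There is no known supersaturation statement that produces the ``cap'' vertices $z_{12}, z_{23}, w$ on top of an arbitrary $K_{2,t}$ in this density regime. The paper's actual proof (for the prism-based family $C^{\Join-}_{2\ell}$, $\ell\ge 7$, not for $H_0$) is substantially more delicate: it passes to an auxiliary graph on $E(G)$, dichotomizes into thick/thin $4$-cycles, and invokes a homomorphic-cycle counting lemma with $\beta$-nice binary relations adapted from Janzer's work. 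This technical overhead is precisely what a cherry count cannot replace.

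Two further issues, less central but worth flagging. First, the step ``gluing two $2$-degenerate graphs at a single vertex keeps $H'$ $2$-degenerate (build $H_1$, then add $V(H_2)\setminus\{q\}$ along a $2$-degeneracy order of $H_2$)'' is not quite correct as stated: a $2$-degeneracy order of $H_2$ need not place $q$ first, and when it does not, a vertex $u_i$ of $H_2$ may pick up $q$ as an extra back-neighbor and have back-degree $3$. You would need to argue that $H_2$ admits a $2$-degenerate ordering with $q$ as its initial vertex (equivalently, an acyclic max-out-degree-$2$ orientation with $q$ as a sink); this requires using that the unique degree-$2$ vertex $p$ of $H_2$ is distinct from $q$ and can be peeled first, and it is not automatic from $2$-degeneracy alone. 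Second, the overall strategy is genuinely different from the paper's: the paper produces a direct infinite family $\{C^{\Join-}_{2\ell}\}_{\ell\ge 7}$ and never appeals to \Cref{thm: main 2}, whereas you appeal to \Cref{thm: main 2} to bootstrap from a single base graph. The iteration idea itself is sound and elegant --- once a single critical $2$-degenerate bipartite $H_0$ with $\ex(n,H_0)=O(n^{3/2})$ is available, gluing its degree-$2$ vertex to a higher-degree vertex of a second copy would indeed manufacture infinitely many such graphs while controlling the extremal number via \Cref{thm: main 2}. But the base case is where the real difficulty lies, and your proposal does not supply it.
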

	
	The smallest graph we constructed in~\Cref{thm: main 3} (Figure~\hyperlink{figone}{2}) has $54$ vertices. It would be interesting to find smaller examples. In particular, does the graph in Figure~\hyperlink{figone}{1} have extremal number $O(n^{3/2})$?
	
	\paragraph{Paper organization.} We deduce a useful lemma concerning the graph gluing operation, and prove \Cref{thm: main 1,thm: main 2} in \Cref{sec:glue}. We derive \Cref{thm: main 3} via explicit constructions in \Cref{sec:example}. 
	
	\section{Proofs of \texorpdfstring{\Cref{thm: main 1}}{Theorem 1} and \texorpdfstring{\Cref{thm: main 2}}{Theorem 2}} \label{sec:glue}
	
	In this section, we begin with stating the key lemma concerning the behavior of extremal numbers under the vertex-gluing operation. From this lemma we can quickly derive \Cref{thm: main 1,thm: main 2}. The proof of the key lemma is given at the end.
	
	For any graph $G$, we denote by $v(G)$ and $e(G)$ the numbers of its vertices and edges, respectively, and $\delta(G), \Delta(G)$ its minimum and maximum vertex degree, respectively. For a vertex $v$ in $G$, denote by $N(v)$ and $\deg(v)$ the neighborhood and the degree of $v$, respectively. We have $|N(v)| = \deg(v)$. 
	
	\begin{lemma} \label{lem:key}
		For any $C, \alpha > 0$, there exists some sufficiently large $N = N_{C, \alpha} > 0$ with the following property: Let $H_1, H_2$ be bipartite graphs with vertices $u \in V(H_1), \, v \in V(H_2)$ and $G$ be a bipartite graph with bipartition $(L, R)$ such that $n \eqdef v(G) \ge N, \, e(G) \ge Cn^{1+\alpha}$. If for every $L' \subseteq L$ and $R' \subseteq R$ with $e(G[L', R']) \ge \frac{e(G)}{48v(H_1)}$, the induced subgraph $G[L', R']$ contains both a copy of $H_1$ and a copy of $H_2$ in which $u, v$ come from $R'$, then $G$ contains a copy of $H_1^u \odot H_2^v$. 
	\end{lemma}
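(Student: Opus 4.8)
We outline a strategy. Write $h_i=v(H_i)$, $a=\deg_{H_1}(u)$, $b=\deg_{H_2}(v)$; all copies of $H_1,H_2$ that we construct will have their glued vertex in $R$, so $N_{H_1}(u)$ and $N_{H_2}(v)$ get embedded into $L$.

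Two reductions are routine, and I would carry them out first. \emph{Clean-up:} repeatedly deleting vertices of current degree below $e(G)/(4n)$ destroys fewer than $e(G)/4$ edges, so we may replace $G$ by the resulting induced subgraph, which has $\delta(G)=\Omega(n^{\alpha})$ and still satisfies the hypothesis with the same threshold $e(G_{\mathrm{old}})/(48h_1)$ (every induced bipartite subgraph of the new $G$ is one of the old). \emph{Supersaturation:} a maximal family of pairwise vertex-disjoint copies of $H_1$ with $u\in R$ must leave fewer than $e(G)/(48h_1)$ edges outside it, since otherwise the hypothesis applied to $G$ minus the family's vertices yields another disjoint such copy; hence the family has size $\Omega\!\bigl(e(G)/(h_1n)\bigr)$, and this survives the deletion of any $O(h_1+h_2)$ further vertices. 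The same holds for $H_2$, so $G$ is very rich in (disjoint) copies of both graphs.

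The real content is to assemble one copy of $H_1$ and one of $H_2$ meeting in exactly one vertex that plays the roles of $u$ and $v$ — the obstacle being that the hypothesis produces copies with no control over which vertex plays that role. My plan to sidestep this is a \emph{re-rooting} move: if $F$ is a copy of $H_1$ with $u\mapsto z$ and some $w\in R\setminus V(F)$ is adjacent to every vertex of the anchor $N_F(z)$ (the $a$-set hosting $N_{H_1}(u)$), then swapping $z$ for $w$ inside $F$ gives a copy $F'$ of $H_1$ with $u\mapsto w$; re-rooting of $H_2$-copies is defined analogously. The point is that if a single vertex $w$ re-roots a copy $F_1$ of $H_1$ and a copy $F_2$ of $H_2$ with $V(F_1)\cap V(F_2)\subseteq\{z_1,z_2\}$ (the two glued vertices), then $F_1'\cup F_2'$ is the desired copy of $H_1^u\odot H_2^v$. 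The number of $H_1$-copies admitting a prescribed re-root $w$ equals the number of copies of $H_1$ extended by a twin $u'$ of $u$ (with $u'\mapsto w$); so the task becomes: exhibit enough such \emph{twin-extended} copies of $H_1$ and of $H_2$ that share a common twin-vertex $w^\star$ and admit vertex-disjoint witnesses. Feeding the superabundance from the supersaturation step and the minimum-degree bound from the clean-up into a double count over triples $(w;F_1,F_2)$ with $w$ re-rooting both an $H_1$-copy $F_1$ and an $H_2$-copy $F_2$, one expects to locate such a $w^\star$ with $H_1$- and $H_2$-witness families each of size $\Omega(n^\alpha)\gg h_1+h_2$; a greedy choice then picks $F_1$, then $F_2$ missing $w^\star$ and $V(F_1)\setminus\{z_1\}$, finishing the construction.

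I expect the genuine obstacle to be exactly this last step — controlling the anchors. A priori the common neighbourhood of an anchor $N_F(z)$ could be confined to $V(F)$, so that $F$ has no re-root at all, and in sparser regimes the number of re-roots per copy can be small; ruling this out, and then producing a single re-root that serves $H_1$ and $H_2$ simultaneously with vertex-disjoint witnesses, seems to require a careful deletion/averaging argument played against the full strength of the hypothesis (every dense induced bipartite subgraph, not merely $G$ itself, carries copies of both $H_1$ and $H_2$ with the glued vertex in the $R$-side), and this is presumably where the factor $48\,v(H_1)$ in the threshold is spent.
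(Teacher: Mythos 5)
Your approach is genuinely different from the paper's, and it has a real gap at the step you yourself flag. Your re-rooting move asks for a vertex $w\notin V(F)$ adjacent to all of $N_F(z)$; equivalently, as you note, it asks for a copy of the graph $H_1^{+}$ obtained from $H_1$ by adding a ``twin'' $u'$ of $u$. But the hypothesis only produces copies of $H_1$ and $H_2$ in dense induced subgraphs; it says nothing about $H_1^{+}$ or $H_2^{+}$, and a graph can be rich in copies of $H_1$ while every anchor $N_F(z)$ has common neighbourhood confined to $V(F)$ (think of a host graph where each anchor is ``private''). The minimum-degree clean-up and the disjoint-copies count do not close this, since neither controls codegrees of $a$-sets, and your sentence about ``a careful deletion/averaging argument played against the full strength of the hypothesis'' is where a proof would need to appear. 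As it stands, the construction never gets off the ground unless re-roots exist.

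The paper's proof avoids re-rooting entirely. After the same minimum-degree reduction, it takes a random ($\tfrac14$-good, via Chernoff) bipartition $(L_1,L_2)$ of the $L$-side, and then runs a greedy algorithm inside $G^*[L_1,R^*]$: it repeatedly picks copies $F_1,F_2,\dots$ of $H_1$, each time choosing a copy whose image $u_i$ of $u$ has \emph{maximum} $G_1$-degree, and forbidding future copies from reusing the low-degree part $S_i'$ of $V(F_i)\cap R^*$. This degree-maximisation trick is what makes the set $X=\{u_1,\dots,u_k\}$ satisfy $X\cap S_i=\{u_i\}$, i.e.\ each glue point is the \emph{only} vertex of $X$ inside its own copy's $R$-side; and a weighting argument shows $e(G^*[L_1,X])\ge e(G_1)/(2v(H_1))$. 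The good partition then transfers this edge mass to $G^*[L_2,X]$, so the hypothesis yields a copy of $H_2$ inside $G_2=G^*[L_1^{\mathrm c}\cap L^*,R^*]$ with $v\in X$, say $v=u_t$; since the $L$-sides live in $L_1$ and $L_2$ respectively, and the $R$-sides meet only at $u_t$ by the claim, $F_t$ and this copy glue exactly as needed. In short: rather than trying to move a fixed copy onto a new glue vertex (which needs twin-extended copies the hypothesis doesn't promise), the paper builds a large, edge-heavy set of \emph{candidate} glue vertices and lets the hypothesis itself hand it a compatible $H_2$-copy, with the random $L$-split and the greedy degree rule jointly guaranteeing the two copies meet only at the glue point.
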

	
	Before proving \Cref{thm: main 1,thm: main 2}, we recall a folklore result in graph theory. 
	
	\begin{fact} \label{fact: bipartite subgraph}
		Every graph $G$ contains a balanced bipartite subgraph on at least $e(G)/2$ edges. 
	\end{fact}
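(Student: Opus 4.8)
The plan is to prove \Cref{fact: bipartite subgraph} by a one-line first-moment argument. Write $n \eqdef v(G)$ (the case $n \le 1$ is vacuous since then $e(G) = 0$). First I would fix a uniformly random \emph{balanced} bipartition $V(G) = A \sqcup B$, meaning that $|A| = \lceil n/2 \rceil$ and $|B| = \lfloor n/2 \rfloor$ and the ordered pair $(A, B)$ is chosen uniformly among all such. Let $X$ denote the number of edges of $G$ having one endpoint in $A$ and one in $B$. The edges counted by $X$ span a balanced bipartite subgraph of $G$, so it suffices to exhibit one outcome of the random partition with $X \ge e(G)/2$.

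The second step is to compute $\mathbb{E}[X]$. By linearity of expectation, $\mathbb{E}[X] = \sum_{xy \in E(G)} p_n$, where $p_n \eqdef \Pr[\text{$x$ and $y$ land on opposite sides}]$ does not depend on the edge $xy$ by symmetry. Counting unordered pairs of vertices gives $p_n = \binom{n}{2}^{-1} \lceil n/2 \rceil \lfloor n/2 \rfloor$; concretely $p_n = \frac{n}{2(n-1)}$ when $n$ is even and $p_n = \frac{n+1}{2n}$ when $n$ is odd, and in both cases $p_n \ge \tfrac12$ for every $n \ge 2$. Hence $\mathbb{E}[X] = p_n \cdot e(G) \ge e(G)/2$, so some balanced bipartition achieves $X \ge e(G)/2$, and the bipartite subgraph formed by its crossing edges is the one we want.

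There is essentially no obstacle here — this is a folklore statement — and the only point deserving a moment's care is the parity split in evaluating $p_n$, which is precisely what makes the constant come out to exactly $\tfrac12$ (indeed with strict inequality whenever $n \ge 2$ and $e(G) \ge 1$). If a constructive argument were preferred, I would instead take a balanced bipartition maximising the number of crossing edges and argue, by swapping a vertex of $A$ with a vertex of $B$, that no vertex can send more than half of its incident edges to its own side, which again yields at least $e(G)/2$ crossing edges; but the averaging argument above is shorter, so that is the route I would take.
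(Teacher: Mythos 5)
Your proof is correct. The paper states this as a folklore fact without supplying any proof, so there is nothing to compare against; your first-moment argument over uniformly random balanced bipartitions, with the parity check that $p_n = \binom{n}{2}^{-1}\lceil n/2\rceil\lfloor n/2\rfloor \ge \tfrac12$, is a clean and standard way to justify it.
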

	
	\begin{proof}[Proof of \Cref{thm: main 1} assuming \Cref{lem:key}]
		We have already seen 
		\[
		\text{``\Cref{conj: mian} $\implies$ \Cref{conj: mian2} $\implies$ \Cref{conj:erdos}''}
		\]
		in \Cref{sec:intro}. It suffices to show ``\Cref{conj:erdos} $\implies$ \Cref{conj: mian}''. 
		
		Write $m_i = \ex(n, H_i)$ for $i = 1, 2$ and assume without loss of generality that $m_1 \ge m_2$. We need to prove $\ex(n, H_1^u \odot H_2^v) = \Theta(m_1)$. The lower bound is straightforward, since every $H_1$-free graph is $H_1^u \odot H_2^v$-free. To see the upper bound, we may assume that $H_1$ contains an even cycle, because $\ex(n, H)$ is linear in $n$ if and only if the bipartite graph $H$ is a forest. This implies that $m_1 \ge Kn^{1+\alpha}$ for some $K, \alpha > 0$. For convenience, we assume $n$ is even. 
		
		Due to the assumption that \Cref{conj:erdos} holds, there exists a constant $C > 0$ such that for $i \in [2]$,
		\[
		z(n/2, n/2, H_i) \le C \cdot \ex(n/2, n/2, H_i) \le C \cdot \ex(n, H_i) = Cm_i\le Cm_1. 
		\]
		Let $G$ be a graph with $e(G) > 96Cv(H_1)m_1$. We are to show that $G$ contains a copy of $H_1^u \odot H_2^v$. 
		
		Thanks to \Cref{fact: bipartite subgraph}, we can find a bipartite subgraph $G'$ of $G$ with partition $(L, R)$ such that $|L| = |R| = n/2$ and $e(G)> 48Cv(H_1)m_1$. For every $L' \subseteq L$ and $R' \subseteq R$ with 
		\[
		e(G[L', R']) \ge \frac{e(G)}{48v(H_1)} > Cm_1  \ge \max\bigl\{z(n/2, n/2, H_1), z(n/2, n/2, H_2)\bigr\}, 
		\]
		the induced subgraph $G[L', R']$ contains a copy of $H_1$ and a copy of $H_2$, where both $u$ and $v$ are from $R'$. From \Cref{lem:key} we deduce that $G'$ (hence $G$) contains a copy of $H_1^u \odot H_2^v$. 
	\end{proof}
	
	\begin{proof}[Proof of \Cref{thm: main 2} assuming \Cref{lem:key}]
		Since $H$ is a subgraph of $H_1^u \odot H_2^v$, it suffices to establish $\ex(n, H_1^u \odot H_2^v) = O\bigl( \ex(n, H) \bigr)$. This is trivial if $H$ is acyclic, for the extremal number of every tree is linear in $n$. We then assume that $H$ contains an even cycle, hence $\ex(n, H) \ge C n^{1+\alpha}$ for some $C, \alpha > 0$. 
		
		Set $m \eqdef v(H)$. Let $G$ be an $n$-vertex graph. Thanks to \Cref{fact: bipartite subgraph}, we may assume further that $G$ is a bipartite graph with bipartition $(L, R)$ and $n \ge N_{C, \alpha}$ (as in \Cref{lem:key}), $e(G) > (48m)^2 \cdot \ex(n, H)$. We are to show that $G$ contains a copy of $H^* \eqdef H_1^u \odot H_2^v$. Assume without loss of generality that $u, v$ both come from $A$ in the bipartition $(A, B)$ of $H$. The proof idea can be informally explained as follows. If every large subgraph $G[L', R']$ of $G[L, R]$ contains $H$ with $u, v \in R'$, then \Cref{lem:key} tells us that we can find $H^*$ by embedding $(A, B)$ into $(R, L)$. Otherwise, there exists a large subgraph $G[L', R']$ containing no $H$ with $u, v \in R'$. Since $H$ is connected, this implies that every large subgraph $G[L'', R'']$ of $G[L', R']$ contains $H$ with $u, v \in L''$, and hence we can find $H^*$ by embedding $(A, B)$ into $(L', R')$. 
		
		Formally, if the induced subgraph $G[L', R']$ contains a copy of $H$ with $u, v \in R'$ for every pair of $L' \subseteq L, \, R' \subseteq R$ with $e(G[L', R']) \ge \frac{e(G)}{48m}$, then \Cref{lem:key} implies that $G$ contains a copy of $H^*$. Otherwise, there exist subsets $L' \subseteq L, \, R' \subseteq R$ with
		\[
		e(G[L', R']) \ge \frac{e(G)}{48m} > (48m) \cdot \ex(n, H) \ge Cn^{1+\alpha}
		\]
		such that $G' \eqdef G[L', R']$ does not contain any copy of $H$ with $u, v \in R'$. For any $L'' \subseteq L'$ and $R'' \subseteq R'$ with $e(G'[L'', R'']) \ge \frac{e(G')}{48m} > \ex(n, H)$, since $G'[L'', R'']$ contains a copy of $H$ while $G'$ does not contain any copy of $H$ with $u, v \in R'$, the fact that $u, v \in A$ and $H$ is connected implies that $G'[L'', R'']$ contains a copy of $H$ with $u, v \in L''$. Again, it follows from \Cref{lem:key} that $G'$ contains a copy of $H^*$. 
	\end{proof}
	
	Before proving \Cref{lem:key}, we recall the Chernoff bound and a folklore graph theory result: 
	
	\begin{prop}[{\cite[Corollary 21.7]{frieze_karonski}}] \label{chernoff}
		Let $X_1, \dots, X_n$ be $\{0, 1\}$-valued independent random variables. Write $X \eqdef \sum_{i=1}^n X_i$ and $\mu \eqdef \mathbb{E}(X)$. Then $\mathbb{P} \bigl[ |X-\mu| \ge \delta\mu \bigr] \le 2e^{-\delta^2\mu/3}$ holds for every $\delta \in [0, 1]$. 
	\end{prop}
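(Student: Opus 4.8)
The statement is the classical multiplicative Chernoff bound, and the plan is to prove its two one‑sided versions by the exponential moment method and then combine them. Since
\[
\bigl\{ |X - \mu| \ge \delta\mu \bigr\} = \bigl\{ X \ge (1+\delta)\mu \bigr\} \cup \bigl\{ X \le (1-\delta)\mu \bigr\},
\]
a union bound reduces everything to showing $\mathbb{P}[X \ge (1+\delta)\mu] \le e^{-\delta^2\mu/3}$ and $\mathbb{P}[X \le (1-\delta)\mu] \le e^{-\delta^2\mu/3}$. Write $p_i \eqdef \mathbb{P}[X_i = 1]$, so that $\mu = \sum_{i=1}^n p_i$. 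The only probabilistic input is that for every real $t$, independence gives
\[
\mathbb{E}\bigl[ e^{tX} \bigr] = \prod_{i=1}^n \mathbb{E}\bigl[ e^{tX_i} \bigr] = \prod_{i=1}^n \bigl( 1 + p_i(e^t - 1) \bigr) \le \exp\bigl( \mu(e^t - 1) \bigr),
\]
where the last step uses $1 + x \le e^x$ coordinatewise.

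For the upper tail, fix $t > 0$ and apply Markov's inequality to the nonnegative random variable $e^{tX}$:
\[
\mathbb{P}\bigl[ X \ge (1+\delta)\mu \bigr] \le e^{-t(1+\delta)\mu}\, \mathbb{E}\bigl[ e^{tX} \bigr] \le \exp\Bigl( \mu\bigl( e^t - 1 - t(1+\delta) \bigr) \Bigr).
\]
The exponent is minimized at $t = \ln(1+\delta)$, legitimate since $\delta \ge 0$, which yields the standard estimate $\bigl( e^{\delta}(1+\delta)^{-(1+\delta)} \bigr)^{\mu}$. It then remains to verify the one‑variable inequality $\delta - (1+\delta)\ln(1+\delta) \le -\delta^2/3$ on $[0,1]$: letting $f(\delta) \eqdef \delta - (1+\delta)\ln(1+\delta) + \delta^2/3$, one has $f(0) = 0$ and $f'(\delta) = \tfrac{2}{3}\delta - \ln(1+\delta)$ with $f'(0) = 0$; since $f''(\delta) = \tfrac{2}{3} - \tfrac{1}{1+\delta}$ vanishes exactly once on $[0,1]$, the function $f'$ attains its maximum over $[0,1]$ at an endpoint, where it equals $0$ or $\tfrac{2}{3} - \ln 2 < 0$, so $f' \le 0$ and $f$ is nonincreasing, giving $f \le 0$.

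The lower tail is handled symmetrically: for $t > 0$, applying Markov's inequality to $e^{-tX}$ gives $\mathbb{P}[X \le (1-\delta)\mu] \le \exp\bigl( \mu(e^{-t} - 1 + t(1-\delta)) \bigr)$, minimized at $t = -\ln(1-\delta) > 0$ when $\delta < 1$, which reduces matters to $-\delta - (1-\delta)\ln(1-\delta) \le -\delta^2/2 \le -\delta^2/3$; this follows from $g(\delta) \eqdef -\delta - (1-\delta)\ln(1-\delta) + \delta^2/2$ satisfying $g(0) = 0$, $g'(0) = 0$, and $g''(\delta) = -\delta/(1-\delta) \le 0$ on $[0,1)$. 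The boundary case $\delta = 1$ of the lower tail is immediate, since $\mathbb{P}[X \le 0] = \prod_{i=1}^n(1-p_i) \le e^{-\mu} \le e^{-\mu/3}$.

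Combining the two one‑sided bounds through the union bound completes the argument. The only place requiring any care is the elementary calculus comparing the optimized exponents $e^{\delta}(1+\delta)^{-(1+\delta)}$ and $e^{-\delta}(1-\delta)^{-(1-\delta)}$ to $e^{-\delta^2/3}$; everything else is the textbook Chernoff computation, and indeed one may instead simply invoke \cite[Corollary 21.7]{frieze_karonski}.
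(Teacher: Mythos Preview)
Your proof is correct; it is the standard exponential-moment derivation of the multiplicative Chernoff bound, and the calculus verifications for the two auxiliary functions $f$ and $g$ are carried out carefully (including the boundary case $\delta = 1$).

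There is nothing to compare against, however: the paper does not prove this proposition at all. It is stated as a cited result from \cite[Corollary~21.7]{frieze_karonski} and used as a black box in the proof of \Cref{lem: good partition}. You yourself note this in your final sentence. So your write-up simply supplies a self-contained proof where the paper is content with a reference; either choice is fine for a standard tool of this kind.
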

	
	\begin{fact} \label{fact: min-degree}
		Every $n$-vertex graph $G$ contains a subgraph $H$ with $e(H) \ge e(G)/2$ and $\delta(H) \ge e(G)/(2n)$. 
	\end{fact}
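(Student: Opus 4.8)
The plan is to use the standard greedy vertex-deletion argument. If $e(G) = 0$ the statement is vacuous, so assume $e(G) > 0$ and set $d \eqdef e(G)/(2n)$. Run the following process on $G$: as long as the current graph contains a vertex of degree strictly less than $d$, delete one such vertex together with its incident edges. Let $H$ be the graph obtained when the process halts. Provided $H$ is nonempty, every remaining vertex has degree at least $d$, so $\delta(H) \ge d = e(G)/(2n)$ by construction; it therefore remains only to bound $e(H)$ from below (which will also show $H \neq \emptyset$).

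For the edge count, note that each deletion step destroys strictly fewer than $d$ edges, since the deleted vertex has degree less than $d$ at the moment of its removal. The process deletes at most $n$ vertices, as $G$ has only $n$ vertices in total, so the number of edges destroyed over the whole process is strictly less than $nd = e(G)/2$. Consequently $e(H) > e(G) - e(G)/2 = e(G)/2 \ge e(G)/2$, and in particular $e(H) > 0$, so $H$ is nonempty and the inequality $\delta(H) \ge e(G)/(2n)$ is meaningful. This completes the argument.

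There is essentially no obstacle here; the single point worth a moment's care is confirming that the process cannot wipe out all of the edges, which is exactly what the bound ``edges destroyed $< nd = e(G)/2 < e(G)$'' provides. One can equivalently phrase the proof non-algorithmically by letting $H$ be a subgraph of $G$ maximizing $e(H) - d\cdot v(H)$: comparing with the empty subgraph and with $G$ itself gives $e(H) - d\cdot v(H) \ge e(G) - dn = e(G)/2 > 0$, whence $H$ is nonempty and $e(H) \ge e(G)/2$; and if some vertex $x \in V(H)$ had $\deg_H(x) < d$, then deleting $x$ would strictly increase the quantity $e(H) - d\cdot v(H)$, contradicting maximality, so $\delta(H) \ge d$. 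I would present the deletion process, as it is the most transparent.
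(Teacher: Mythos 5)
The paper states this as a folklore fact and does not include a proof, so there is nothing to compare against directly. Your argument is the standard one and is correct: the greedy deletion of low-degree vertices (or, equivalently, taking a subgraph maximizing $e(H) - d\cdot v(H)$) destroys strictly fewer than $nd = e(G)/2$ edges in total, so the surviving graph is nonempty, retains more than half the edges, and has minimum degree at least $d = e(G)/(2n)$. Both phrasings you give are sound, and you correctly dispose of the $e(G)=0$ case and the worry that the process might exhaust all vertices.
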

	
	Let $G$ be a bipartite graph with bipartition $(L, R)$. For any $\varepsilon > 0$, call $(L_1, L_2)$ an $\varepsilon$-\emph{good partition} of $L$ if $|N(v) \cap L_i| = (1/2 \pm \varepsilon) \deg(v)$ holds for all $v \in R$ and $i = 1, 2$. That is, 
	\[
	\biggl| |N(v) \cap L_i| - \frac{\deg(v)}{2} \biggr| \le \varepsilon \deg(v). 
	\]
	
	\begin{lemma}\label{lem: good partition}
		For any $\varepsilon, C, \alpha > 0$, there exists some sufficiently large $N = N_{\varepsilon, C, \alpha} > 0$ with the following property: Let $G$ be a bipartite graph with bipartition $(L, R)$ such that $n \eqdef v(G) \ge N$ and $\delta(G) \ge Cn^{\alpha}$. Then there exists an $\varepsilon$-good partition of $L$. 
	\end{lemma}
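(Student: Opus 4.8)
The plan is to take a uniformly random partition of $L$ and control each vertex of $R$ by a Chernoff bound, then conclude by a union bound. Concretely, assign every vertex of $L$ independently to $L_1$ or $L_2$, each with probability $1/2$. Fix $v \in R$. Then $|N(v) \cap L_1|$ is a sum of $\deg(v)$ independent $\{0,1\}$-valued random variables of mean $1/2$, so it has mean $\mu \eqdef \deg(v)/2$. We may assume $\varepsilon \le 1/2$, since otherwise every partition is automatically $\varepsilon$-good (the quantity $|N(v) \cap L_i|$ always lies in $[0, \deg(v)] \subseteq [(1/2 - \varepsilon)\deg(v), (1/2 + \varepsilon)\deg(v)]$). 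Applying \Cref{chernoff} with $\delta = 2\varepsilon \in (0, 1]$ yields
\[
\mathbb{P} \Bigl[ \bigl| |N(v) \cap L_1| - \tfrac{\deg(v)}{2} \bigr| \ge \varepsilon \deg(v) \Bigr] \le 2 e^{-(2\varepsilon)^2 \mu / 3} = 2 e^{-2\varepsilon^2 \deg(v)/3} \le 2 e^{-2\varepsilon^2 C n^\alpha / 3},
\]
where the last inequality uses $\deg(v) \ge \delta(G) \ge C n^\alpha$, and the bound is uniform over $v \in R$.

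Next I would take a union bound over the at most $n$ vertices $v \in R$: the probability that the random partition fails to satisfy $|N(v) \cap L_1| = (1/2 \pm \varepsilon)\deg(v)$ for some $v \in R$ is at most $2n \cdot e^{-2\varepsilon^2 C n^\alpha / 3}$. Since the exponential decays faster than any polynomial in $n$, there is a threshold $N = N_{\varepsilon, C, \alpha}$ such that this quantity is strictly less than $1$ for all $n \ge N$; hence for such $n$ some partition works for every $v \in R$ simultaneously. Finally, because $|N(v) \cap L_2| = \deg(v) - |N(v) \cap L_1|$, having $|N(v) \cap L_1| = (1/2 \pm \varepsilon)\deg(v)$ forces $|N(v) \cap L_2| = (1/2 \mp \varepsilon)\deg(v)$ as well, so controlling only $L_1$ is enough to certify that $(L_1, L_2)$ is an $\varepsilon$-good partition.

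There is no substantial obstacle in this argument; it is a routine first-moment computation. The only points needing mild care are the reduction to $\varepsilon \le 1/2$ so that \Cref{chernoff} applies with $\delta \le 1$, and making the threshold $N$ explicit enough that $2n\, e^{-2\varepsilon^2 C n^\alpha / 3} < 1$, which follows immediately from $n^\alpha$ dominating $\log n$. It is worth noting that this lemma is naturally invoked after first passing to a large subgraph of high minimum degree via \Cref{fact: min-degree}, which explains why the hypothesis is stated in terms of $\delta(G)$ rather than $e(G)$.
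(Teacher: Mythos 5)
Your proof is correct and is essentially the same as the paper's: both take a uniformly random bipartition of $L$, apply the Chernoff bound (\Cref{chernoff}) to $|N(v)\cap L_1|$ for each $v\in R$, and finish with a union bound over the at most $n$ vertices of $R$. You additionally flag the harmless reduction to $\varepsilon\le 1/2$ so that the Chernoff parameter $\delta=2\varepsilon$ lies in $[0,1]$, a point the paper leaves implicit.
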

	
	\begin{proof}
		Let $L_1, L_2$ be a uniform random partition of $L$. For each $v \in R$, we write $X_v \eqdef |N(v) \cap L_1|$, and so $\mathbb{E}(X_v) = \deg(v)/2$. It then follows from the Chernoff bound (\Cref{chernoff}) that
		\[
		\mathbb{P} \bigl[ |X_v-\deg(v)/2| \ge \varepsilon \deg(v) \bigr] \le 2e^{-2\varepsilon^2\deg(v)/3} \le 2e^{-2\varepsilon^2\delta(G)/3} \le 2e^{-2\varepsilon^2 C n^\alpha/3}. 
		\]
		So, when $n$ is sufficiently large in terms of $\varepsilon, C, \alpha$, from the union bound we deduce that 
		\[
		\mathbb{P} \bigl[ \text{$|X_v-\deg(v)/2| < \varepsilon \deg(v)$ holds for each $v \in R$} \bigr] \ge 1 - n \cdot 2e^{-2\varepsilon^2 C n^\alpha/3} > 0. 
		\]
		Thus, with positive probability $(L_1, L_2)$ gives an $\varepsilon$-good partition of $L$, as desired. 
	\end{proof}
	
	\begin{proof}[Proof of \Cref{lem:key}]
		Due to~\Cref{fact: min-degree}, there exists a subgraph $G^*$ of $G$ with $\delta(G^*) \ge e(G)/(2n) \ge Cn^\alpha/2$ and $e(G^*) \ge e(G)/2$. Let $(L^*, R^*)$ be a bipartition of $G^*$ with $L^* \subseteq L, \, R^* \subseteq R$. By~\Cref{lem: good partition}, there is a $\frac{1}{4}$-good partition $(L_1, L_2)$ of $L^*$. For $i = 1, 2$, set $G_i \eqdef G^*[L_i, R^*]$. Then $\frac{1}{4} e(G^*) \le e(G_i) \le \frac{3}{4} e(G^*)$. 
		
		In $G_1$, we are going to take $F_1, \dots, F_k$, a sequence of copies of $H_1$ such that in each $F_i$, the vertex corresponding to $u$ lies in $R^*$. Let $u_i$ be the copy of $u$ in $F_i$ and write $S_i \eqdef V(F_i) \cap R^*$ for $i = 1, \dots, k$ with $S_0 \eqdef \varnothing$. For $i = 0, 1, \dots$, conduct the following algorithm: 
		
		\vspace{-0.5em}
		\begin{itemize}
			\item Suppose $F_1, \dots, F_i$ have been constructed. For $j \le i$, set $S'_j \eqdef \bigl\{ x \in S_j : \deg_{G_1}(x) \le \deg_{G_1}(u_j) \bigr\}$. 
			\vspace{-0.5em}
			\begin{itemize}
				\item If there exists a copy of $H_1$ in $G_1$ whose vertex set is disjoint from $\bigcup_{j=1}^i S_j'$, then define $F_{i+1}$ as an arbitrary such copy maximizing $\deg_{G_1}(u_{i+1})$. 
				\item Otherwise, the process halts with $k \eqdef i$ and $X \eqdef \{u_1, \dots, u_k\}$. 
			\end{itemize}
		\end{itemize}
		
		\begin{claim} \label{cl:X}
			We have $X \cap S_i = \{u_i\}$ for $i = 1, \dots, k$. We also have $e(G^*[L_1, X]) \ge \frac{e(G_1)}{2v(H_1)}$. 
		\end{claim}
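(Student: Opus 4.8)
The plan is to establish the two assertions of \Cref{cl:X} in turn, after first recording two structural features of the greedily built sequence $F_1, \dots, F_k$. The first is the monotonicity $\deg_{G_1}(u_1) \ge \deg_{G_1}(u_2) \ge \cdots \ge \deg_{G_1}(u_k)$: since $\bigcup_{j \le i-1} S'_j \subseteq \bigcup_{j \le i} S'_j$, the copy $F_{i+1}$ is in particular disjoint from $\bigcup_{j \le i-1} S'_j$, so it was already an admissible candidate at the step where $F_i$ was chosen; as $F_i$ was selected to maximize the degree of its $u$-vertex among such candidates, $\deg_{G_1}(u_{i+1}) \le \deg_{G_1}(u_i)$. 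The second is that $S'_1, \dots, S'_k$ are pairwise disjoint, since for $j < i$ we have $S'_i \subseteq S_i \subseteq V(F_i)$ while $V(F_i)$ is disjoint from $S'_j$ by construction.

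For the first assertion I would argue as follows. Clearly $u_i \in S_i$, because $u_i$ is the copy of $u$ in $F_i$, which lies in $R^*$ by construction, and $u_i \in V(F_i)$. If $j < i$, then $u_j \in S'_j$ (as $u_j \in S_j$ and $\deg_{G_1}(u_j) \le \deg_{G_1}(u_j)$) while $V(F_i)$ avoids $S'_j$, so $u_j \notin S_i$. If $j > i$, suppose toward a contradiction that $u_j \in S_i$; monotonicity gives $\deg_{G_1}(u_j) \le \deg_{G_1}(u_i)$, so the definition of $S'_i$ forces $u_j \in S'_i$, contradicting that $V(F_j)$ avoids $S'_i$. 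Hence $X \cap S_i = \{u_i\}$; in particular the $u_i$ are pairwise distinct, so $|X| = k$.

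For the second assertion, I would set $W \eqdef \bigcup_{j=1}^k S'_j$ and split $E(G_1)$ according to whether an edge meets $W$. Since $X = \{u_1, \dots, u_k\} \subseteq R^*$ has $k$ distinct elements and $N_{G^*}(u_i) \cap L_1 = N_{G_1}(u_i)$, we get $e(G^*[L_1, X]) = \sum_{i=1}^k \deg_{G_1}(u_i)$. Using pairwise disjointness of the $S'_j$, the trivial bound $|S'_j| \le v(H_1)$, and $\deg_{G_1}(x) \le \deg_{G_1}(u_j)$ for $x \in S'_j$, the number of edges of $G_1$ incident to $W$ is at most $v(H_1) \sum_{i=1}^k \deg_{G_1}(u_i)$. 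It remains to bound $e(G_1 - W)$, and here I would invoke the hypothesis of \Cref{lem:key}. The subgraph $G^*$ furnished by \Cref{fact: min-degree} may be taken to be an \emph{induced} subgraph of $G$ (the standard proof only deletes low-degree vertices), so $G_1 - W$ equals the induced bipartite subgraph $G[L_1, R^* \setminus W]$. Were $e(G_1 - W) \ge e(G)/(48\,v(H_1))$, the hypothesis would produce a copy of $H_1$ in $G[L_1, R^* \setminus W]$ with its $u$-vertex in $R^* \setminus W$; this is a copy of $H_1$ in $G_1$ with $u$-vertex in $R^*$ and vertex set avoiding $W$, contradicting the termination of the algorithm. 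Hence $e(G_1 - W) < e(G)/(48\,v(H_1)) \le e(G_1)/2$, using $e(G_1) \ge \tfrac14 e(G^*) \ge \tfrac18 e(G)$. Combining the three estimates,
\[
e(G_1) \;=\; e(G_1 - W) \,+\, \#\{\text{edges of } G_1 \text{ incident to } W\} \;<\; \tfrac12 e(G_1) \,+\, v(H_1) \sum_{i=1}^k \deg_{G_1}(u_i),
\]
and rearranging yields $e(G^*[L_1, X]) = \sum_{i=1}^k \deg_{G_1}(u_i) > e(G_1)/(2\,v(H_1))$, as required.

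The main obstacle, I expect, is the estimate on $e(G_1 - W)$. The naive route — observing that $G_1 - W$ is $H_1$-free and invoking $\ex(\cdot, H_1)$ — fails within \Cref{lem:key}, because the constant $C$ there is arbitrary and $e(G)$ need not dominate $\ex(n, H_1)$; moreover termination only rules out copies of $H_1$ with the $u$-vertex in $R^*$, which is weaker than $H_1$-freeness. The fix is to use that \Cref{fact: min-degree} yields an induced subgraph, so that the hypothesis of \Cref{lem:key} applies to $G_1 - W = G[L_1, R^* \setminus W]$ directly; the one point to be careful about is that the copy it hands back has its $u$-vertex on the $R$-side, which is precisely the form in which the hypothesis is stated.
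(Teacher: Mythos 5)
Your proof is correct and takes essentially the same approach as the paper's: the same two-case contradiction for the first assertion (termination of the algorithm for $j < i$, maximality of $\deg_{G_1}(u_j)$ for $j > i$), and the same decomposition of $e(G_1)$ for the second (edges to $\bigcup_i S'_i$ bounded above by $v(H_1)\sum_i \deg_{G_1}(u_i)$, edges avoiding it bounded by $\tfrac12 e(G_1)$ via the hypothesis of \Cref{lem:key}). You are right to flag that $G^*$ from \Cref{fact: min-degree} should be taken as an induced subgraph and that the $S'_i$ are pairwise disjoint — both are used tacitly in the paper's version, so these are careful clarifications rather than divergences.
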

		
		\begin{poc}
			Assume to the contrary that there exist distinct $i, j \in \{1, \dots, k\}$ satisfying $u_i \in S_j$. If $i < j$, then $u_i \in S'_i \cap S_j = \emptyset$, a contradiction. If $i > j$, then $u_i \in S_j \setminus S'_j$, and so $\deg_{G_1}(u_i) > \deg_{G_1}(u_j)$, contradicting the maximum assumption on $\deg_{G_1}(u_j)$. We thus conclude the first part of the claim. 
			
			To see the second part, denote $S' \eqdef \bigcup_{i=1}^{k} S'_i$. There is no copy of $H_1$ in $G_1$ with $u \in R^* \setminus S'$. So, 
			\[
			e(G_1[L_1, R^* \setminus S']) < \frac{e(G_1)}{48v(H_1)} < \frac{1}{2} e(G_1) \implies e(G_1[L_1, S']) \ge \frac{1}{2} e(G_1). 
			\]
			Observe that $\sum_{x \in S_i'} \deg_{G_1}(x) \le |S_i'| \cdot \deg_{G_1}(u_i) \le |S_i| \cdot \deg_{G_1}(u_i) \le v(H_1) \cdot \deg_{G_1}(u_i)$. We thus obtain
			\[
			v(H_1) \cdot e(G^*[L_1, X]) = v(H_1) \sum_{i=1}^k \deg_{G_1}(u_i) \ge \sum_{i=1}^k \sum_{x \in S'_i} \deg_{G_1}(x) = e(G_1[L_1, S']) \ge \frac{1}{2} e(G_1), 
			\]
			which concludes the second part of the claim. 
		\end{poc}  
		
		Since $(L_1, L_2)$ is a $\frac{1}{4}$-good partition, from \Cref{cl:X} we deduce that
		\begin{align*}
			e(G^*[L_2, X]) \ge \frac{1}{3} e(G^*[L_1, X]) \ge \frac{e(G_1)}{6v(H_1)} \ge \frac{e(G^*)}{24v(H_1)} \ge \frac{e(G)}{48v(H_1)}. 
		\end{align*}
		So, our assumption on $G$ implies that $G_2$ contains a copy of $H_2$ (denoted by $H_2'$) such that the vertex $v$ of $H_2$ (denoted by $v'$) in $H_2'$ appears in $X$. Let $t$ be the index such that $v' = u_t$. Then \Cref{cl:X} shows that $V(H'_2) \cap V(F_t) = \{v'\} = \{u_t\}$, and hence $F_t$ together with $H_2'$ gives a copy of $H_1^u \odot H_2^v$. 
	\end{proof}

	\section{Critical \texorpdfstring{$2$}{2}-degenerate graphs} \label{sec:example}
	
	For graphs $G$ and $H$, the \emph{Cartesian product} $G \boxp H$ is the graph on vertex set $V(G) \times V(H)$, where two vertices $(u, v)$ and $(u', v')$ are adjacent if and only if either $u = u'$ and $\{v, v'\} \in E(H)$, or $v = v'$ and $\{u, u'\} \in E(G)$. An edge between $(u, v)$ and $(u', v')$ is of \emph{type} $H$ if $u = u'$, and of \emph{type} $G$ if $v = v'$. 
	
	For every integer $\ell \ge 3$, we define the \emph{prism graph} $C^{\boxp}_{\ell} \eqdef C_\ell \boxp K_2$. This graph $C^{\boxp}_{\ell}$ consists of two disjoint $\ell$-cycles whose edges are of type $C_{\ell}$ and an $\ell$-matching whose edges are of type $K_2$. Recently, Gao, Janzer, Liu, and Xu~\cite{gao_extremal_2023} established $\ex(n, C^{\boxp}_{2\ell}) = \Theta_{\ell}(n^{3/2})$ for all $\ell \ge 4$. 
	
	Let $C^{\Join}_{2\ell}$ be the graph obtained by gluing two copies of $C^{\boxp}_{2\ell}$ along one edge of type $K_2$, and $C^{\Join-}_{2\ell}$ be the graph obtained from $C^{\Join}_{2\ell}$ by removing the edge $e_1$ being merged and another edge $e_2$ sharing a single vertex with $e_1$. See Figure~\hyperlink{figtwo}{2} for an illustration. 
	
	\centerline{
		\begin{tikzpicture}
			\clip (-3,-3.25) rectangle (14,2.5);			
			\draw[thick,red,dashed] (5.5,0.5) -- (7,1.75);
			\draw[thick,red,dashed] (5.5,0.5) -- (5.5,-0.5);
			\node at (5.72,0) {\textcolor{red}{$e_1$}};
			\node at (6.05,1.25) {\textcolor{red}{$e_2$}};		
			\foreach \m in {-1,0,1,2,3,4,7,8,9,10,11,12}
			{
				\foreach \n in {0.75, 1.75, -0.75, -1.75}
				\draw[fill = black] (\m,\n) circle (0.075);
				\draw[thick] (\m,0.75) -- (\m,1.75);
				\draw[thick] (\m,-0.75) -- (\m,-1.75);
			}
			\foreach \m in {1,2,3,4,5,6}
			{
				\node at (\m+6,2){\textcolor{blue}{\footnotesize $\m$}};
			}
			\node at (13.5,0.75){\textcolor{blue}{\footnotesize $7$}};		
			\foreach \m in {8,9,10,11,12,13}
			{
				\node at (20-\m,-0.5){\textcolor{blue}{\footnotesize $\m$}};
			}
			\node at (5.5,0.8){\textcolor{blue}{\footnotesize $14$}};		
			\foreach \m in {15,16,17,18,19,20}
			{
				\node at (19-\m,2){\textcolor{blue}{\footnotesize $\m$}};
			}
			\node at (-2.5,0.75){\textcolor{blue}{\footnotesize $21$}};	
			\foreach \m in {22,23,24,25,26,27}
			{
				\node at (\m-23,-0.5){\textcolor{blue}{\footnotesize $\m$}};
			}
			\foreach \m in {28,29,30,31,32,33}
			{
				\node at (32-\m,-2){\textcolor{blue}{\footnotesize $\m$}};
			}
			\node at (-2.5,-0.75){\textcolor{blue}{\footnotesize $34$}};
			\foreach \m in {35,36,37,38,39,40}
			{
				\node at (\m-36,0.5){\textcolor{blue}{\footnotesize $\m$}};
			}
			\node at (5.5,-0.8){\textcolor{blue}{\footnotesize $41$}};
			\foreach \m in {42,43,44,45,46,47}
			{
				\node at (\m-35,-2){\textcolor{blue}{\footnotesize $\m$}};
			}
			\node at (13.5,-0.75){\textcolor{blue}{\footnotesize $48$}};
			\foreach \m in {49,50,51,52,53,54}
			{
				\node at (61-\m,0.5){\textcolor{blue}{\footnotesize $\m$}};
			}
			\draw[fill = black] (-2.5,0.5) circle (0.075);
			\draw[fill = black] (-2.5,-0.5) circle (0.075);
			\draw[fill = black] (5.5,0.5) circle (0.075);
			\draw[fill = black] (5.5,-0.5) circle (0.075);
			\draw[fill = black] (5.5,0.5) circle (0.075);
			\draw[fill = black] (5.5,-0.5) circle (0.075);
			\draw[fill = black] (13.5,0.5) circle (0.075);
			\draw[fill = black] (13.5,-0.5) circle (0.075);
			\draw[thick] (-2.5,0.5) -- (-2.5,-0.5);
			\draw[thick] (-2.5,0.5) -- (-1,1.75);
			\draw[thick] (-2.5,0.5) -- (-1,-0.75);
			\draw[thick] (-2.5,-0.5) -- (-1,-1.75);
			\draw[thick] (-2.5,-0.5) -- (-1,0.75);
			\draw[thick] (13.5,0.5) -- (13.5,-0.5);
			\draw[thick] (13.5,0.5) -- (12,1.75);
			\draw[thick] (13.5,0.5) -- (12,-0.75);
			\draw[thick] (13.5,-0.5) -- (12,-1.75);
			\draw[thick] (13.5,-0.5) -- (12,0.75);
			\draw[thick] (5.5,0.5) -- (7,-0.75);
			\draw[thick] (5.5,-0.5) -- (7,-1.75);
			\draw[thick] (5.5,-0.5) -- (7,0.75);
			\draw[thick] (5.5,0.5) -- (4,1.75);
			\draw[thick] (5.5,0.5) -- (4,-0.75);
			\draw[thick] (5.5,-0.5) -- (4,-1.75);
			\draw[thick] (5.5,-0.5) -- (4,0.75);
			\draw[thick] (-1,0.75) -- (4,0.75);
			\draw[thick] (-1,1.75) -- (4,1.75);
			\draw[thick] (-1,-0.75) -- (4,-0.75);
			\draw[thick] (-1,-1.75) -- (4,-1.75);
			\draw[thick] (7,0.75) -- (12,0.75);
			\draw[thick] (7,1.75) -- (12,1.75);
			\draw[thick] (7,-0.75) -- (12,-0.75);
			\draw[thick] (7,-1.75) -- (12,-1.75);
			\node at (5.5,-2.75) {\textbf{\hypertarget{figtwo}{Figure 2}:} The graph $C^{\Join-}_{14}$ with a vertex ordering.}; 
		\end{tikzpicture}
	}
	
	In Figure~\hyperlink{figtwo}{2}, the blue ordering implies that $C^{\Join-}_{2\ell}$ is a {\maximal} $2$-degenerate graph. We shall prove $\ex(C^{\Join-}_{2\ell}) = O_{\ell}(n^{3/2})$ for each $\ell \ge 7$, which offers an infinite family of graphs promised by~\Cref{thm: main 3}. 
	
	\begin{theorem}\label{thm: 2-de}
		For any integer $\ell \ge 7$, we have $\ex(n, C^{\Join-}_{2\ell}) \le \ex(n, C^{\Join}_{2\ell}) = O(n^{3/2})$. 
	\end{theorem}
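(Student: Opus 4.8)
The first inequality is immediate: since $C^{\Join-}_{2\ell}$ is a (spanning) subgraph of $C^{\Join}_{2\ell}$, any $C^{\Join-}_{2\ell}$-free graph is $C^{\Join}_{2\ell}$-free, so $\ex(n, C^{\Join-}_{2\ell}) \le \ex(n, C^{\Join}_{2\ell})$. The content of the theorem is thus the bound $\ex(n, C^{\Join}_{2\ell}) = O(n^{3/2})$; equivalently, every $n$-vertex $G$ with $e(G) \ge C_\ell\, n^{3/2}$, for a suitably large constant $C_\ell$, contains $C^{\Join}_{2\ell}$.

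Write $P := C^{\boxp}_{2\ell}$ with bipartition $(A,B)$ and a fixed rung $e = \{x,y\}$, $x \in A$, $y \in B$, so that $C^{\Join}_{2\ell} = P^e \odot P^e$ consists of two copies of $P$ identified along $e$ and otherwise disjoint. First, by \Cref{fact: bipartite subgraph}, \Cref{fact: min-degree}, and \Cref{lem: good partition}, I would pass to a bipartite subgraph $G'$ with bipartition $(L,R)$, a $\frac14$-good partition $(L_1,L_2)$ of $L$, minimum degree $\delta(G') \ge c_\ell \sqrt n$, and $e(G'[L_1,R]), e(G'[L_2,R]) \ge c_\ell'\, n^{3/2}$, the constants chosen so that deleting any $O_\ell(1)$ vertices still leaves more than $2\,\ex(m, P)$ edges (here $m \le n$ is the current order). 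The input I would take from \cite{gao_extremal_2023} is that, because $P$ is connected bipartite and admits the part-swapping automorphism $a_i \leftrightarrow b_i$, one has $z(n,n,P) = \ex(n,n,P) \le \ex(2n, P) = O(n^{3/2})$; hence every sufficiently dense induced bipartite subgraph of $G'$ contains a copy of $P$ in either orientation relative to $(L,R)$.

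The argument then follows the template of the proof of \Cref{lem:key}, but carrying along a designated \emph{rung} rather than a single vertex. Inside $G'[L_1,R]$ I would greedily build prisms $F_1,\dots,F_k$, each with a designated rung $\{p_i,q_i\}$ (the images of $x$ and $y$, with $p_i \in L_1$, $q_i \in R$ after possibly applying the part-swapping automorphism), at each step choosing a prism whose remaining vertices avoid the already-committed low-degree vertices and maximizing $\deg_{G'}(p_i)$. The analogue of \Cref{cl:X} then shows the process halts with $X := \{p_1,\dots,p_k\}$ supporting a positive fraction of $e(G'[L_1,R])$ and $X \cap V(F_i) \cap R = \{p_i\}$ for each $i$. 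Combining this with the good partition and the bound $z(n,n,P) = O(n^{3/2})$, I would locate an index $t$ and a second copy $F'$ of $P$ that is \emph{rooted} at the rung $\{p_t,q_t\}$ — the images of $x$ and $y$ in $F'$ are exactly $p_t$ and $q_t$ — with the remaining $A$-vertices of $F'$ in $L_2$ and the remaining $B$-vertices in a large subset of $R$ disjoint from $V(F_t)$. Since the $L$-part of $F'$ meets $L_1$ only in $p_t$ and its $R$-part meets $V(F_t)$ only in $q_t$, one gets $V(F') \cap V(F_t) = \{p_t,q_t\}$, so $F_t \cup F'$ contains $C^{\Join}_{2\ell}$.

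The main obstacle is precisely the rooted-embedding step. Unlike the vertex-gluing of \Cref{thm: main 2}, the two copies of $P$ must overlap in a whole edge — one endpoint in $L$, one in $R$ — so the clean separation ``one copy drawn in $G'[L_1,\cdot]$, the other in $G'[L_2,\cdot]$'' only handles the $L$-endpoint of the shared rung; the $R$-endpoint $q_t$ must be reused, and $z(n,n,P) = O(n^{3/2})$ supplies only \emph{unrooted} prisms. The crux is thus to show that the dense bipartite graph $G'\bigl[L_2 \cup \{p_t\},\, (R \setminus V(F_t)) \cup \{q_t\}\bigr]$ contains a copy of $P$ in which $x,y$ land on $p_t,q_t$; equivalently, a ladder $P_2 \boxp P_{2\ell-1}$ in $G'[L_2, R \setminus V(F_t)]$ whose two ``top'' corners lie in $N(p_t)$ and whose two ``bottom'' corners lie in $N(q_t)$ (both of size $\Omega(\sqrt n)$ by the good partition). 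This is a robust, anchored refinement of the prism/Zarankiewicz bound, and it is where the cycle length enters: I expect one needs $\ell \ge 7$ so that the ladder has enough slack both to route around the $4\ell-2$ forbidden vertices of $F_t$ and to close up with its far corners anchored in the prescribed neighborhoods. The greedy bookkeeping (the analogues of the algorithm and of \Cref{cl:X} from the proof of \Cref{lem:key}) I would expect to be routine once this rooted embedding is in hand.
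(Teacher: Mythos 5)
You correctly dispose of the first inequality, but the core step of your proposal — the anchored/rooted embedding of a prism at a prescribed rung $\{p_t,q_t\}$ — is a genuine gap, and not one that can be patched by the tools you invoke. The bound $z(n,n,P)=O(n^{3/2})$ is an \emph{unrooted} extremal statement: it guarantees a prism somewhere inside a dense bipartite graph, but gives no purchase on forcing the images of $x$ and $y$ onto two specified vertices whose neighborhoods in the relevant parts have size only $\Theta(\sqrt n)$. This is qualitatively harder than the vertex-gluing situation handled by \Cref{lem:key}, where the ``rooted'' side is a single vertex that one is free to choose as the designated $u$-vertex of \emph{any} embedded copy, rather than both endpoints of a shared edge being pinned down simultaneously. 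You flag this yourself as the ``main obstacle'' and then assert the remaining bookkeeping is routine ``once this rooted embedding is in hand,'' but the rooted embedding is exactly what the theorem costs, and nothing in your proposal supplies it. Your speculation that $\ell \ge 7$ reflects ``slack'' for routing a ladder around $F_t$ is also not what happens: in the paper's argument, $\ell \ge 7$ enters only in Case 2 to ensure the density exponent $m^{-1/6}$ beats the threshold $m^{-1/\ell}(\log m)^{-4}$ required by \Cref{lem:goodC2k} (one needs $1/\ell < 1/6$), which is unrelated to any ladder-routing consideration.

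The paper's actual proof does not go through a gluing lemma at all. After passing to a $K$-almost-regular bipartite subgraph (\Cref{lem:JiangSeiverK-almost}) and applying $C_4$-supersaturation (\Cref{Supersaturation of cycles}), it classifies $4$-cycles as \emph{thin} or \emph{thick} according to the codegree of their diagonal pairs. In the thick case, a pigeonholed edge $xy$ sits in many thick $4$-cycles, and the high codegree condition feeds directly into \Cref{lem:p2timespt} to produce a $P^{\boxp}_{4\ell}$ whose two halves, together with $x$ and $y$, assemble into $C^{\Join}_{2\ell}$ — this is precisely what replaces your missing rooted embedding: it is the \emph{thickness} of the $4$-cycles, not the Zarankiewicz bound, that anchors the structure at $x$ and $y$. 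In the thin case, the proof builds an auxiliary graph $\Gamma$ on $E(G)$ whose edges are diagonals of thin $4$-cycles, and then runs Janzer-style machinery (\Cref{lem:goodC2k}, $\beta$-nice relations, homomorphism-count inequalities) to find two homomorphic $2\ell$-cycles in $\Gamma$ that intersect in exactly one $\Gamma$-vertex, i.e.\ two prisms in $G$ sharing exactly one rung. Neither case resembles the anchored ladder search you sketch, and I do not see how to make your version close without importing something of comparable strength to the thin/thick dichotomy and the auxiliary-graph argument.
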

	
	\subsection{Proof of \texorpdfstring{\Cref{thm: 2-de}}{Theorem 3.1}}
	
	To begin with, we clean up our host graph by finding a well-behaved almost regular subgraph. For $K > 0$, a graph $G$ is $K$-\emph{almost regular} if $\Delta(G) \le K\delta(G)$. This kind of regularization was developed by Erd\H{o}s and Simonovits \cite{1970Erdos}. We are going to use the following variant. 
	
	\begin{lemma}[{\cite[Proposition 2.7]{2012SIAMJiang}}]\label{lem:JiangSeiverK-almost}
		Suppose $n \gg C \ge 1$. If $G$ is an $n$-vertex graph with $e(G) \ge Cn^{3/2}$, then $G$ has a $10^3$-almost regular subgraph $G'$ on $m \ge n^{1/12}$ vertices such that $e(G') \ge (C/3) \cdot m^{3/2}$. 
	\end{lemma}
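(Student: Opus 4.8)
The statement is the Erd\H{o}s--Simonovits degree-regularization lemma~\cite{1970Erdos}, in the explicit form recorded by Jiang--Seiver; the plan is to reprove it by the standard iterative ``peeling'' argument, tracking the density functional $\rho(F) \eqdef e(F)/v(F)^{3/2}$, so that the hypothesis reads $\rho(G) \ge C$. It suffices to produce a subgraph $G'$ with $\Delta(G') \le 10^{3}\,\delta(G')$, with $\rho(G') \ge C/3$ (equivalently $e(G') \ge (C/3)\,v(G')^{3/2}$), and with $v(G') \ge n^{1/12}$. We obtain such a $G'$ as the last term of a strictly decreasing chain $G = F_{0} \supsetneq F_{1} \supsetneq \dots \supsetneq F_{t} = G'$, each step being one of two moves: \emph{trimming} (delete a low-degree vertex) and \emph{rebalancing} (pass to a carefully chosen almost-regular bipartite piece).

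\emph{Trimming.} Write $\bar d(F) \eqdef 2e(F)/v(F)$. While $F_{i}$ has a vertex $w$ with $\deg_{F_i}(w) < \tfrac14\bar d(F_i)$, delete it and set $F_{i+1} \eqdef F_i - w$; a one-line calculation (using $(1-1/v)^{3/2}\le 1-1/v$) shows that this move leaves $\rho$ non-decreasing and makes $\bar d$ strictly increase. Consequently, at the end of a maximal run of trimming moves the current subgraph $F$ satisfies $\rho(F) \ge \rho(G) \ge C$, $\bar d(F) \ge \bar d(G) \ge 2C\sqrt n$, $\delta(F) \ge \tfrac14\bar d(F) \ge \tfrac{C}{2}\sqrt{v(F)}$, and --- crucially --- $v(F) > \bar d(F) \ge 2C\sqrt n$, so the vertex count never falls below $2C\sqrt n$, which is far above $n^{1/12}$. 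If now $\Delta(F) \le 10^{3}\,\delta(F)$, we output $G' = F$; otherwise $\Delta(F) > 10^{3}\,\delta(F)$, the degree sequence of $F$ spreads over at least ten dyadic scales, and we perform a single rebalancing move.

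\emph{Rebalancing --- the crux.} Here $F$ is far from regular, and the task is to extract a subgraph that is \emph{both} dense at exponent $3/2$ \emph{and} almost regular. Partition $V(F)$ into dyadic degree classes $V_{j} \eqdef \{x : 2^{j} \le \deg_F(x) < 2^{j+1}\}$; there are at most $\log_{2} v(F)$ nonempty ones, so the identity $\sum_{j}\sum_{x\in V_j}\deg_F(x) = 2e(F)$ singles out a class $V_{j_0}$ incident to an $\Omega\!\bigl(1/\log v(F)\bigr)$ fraction of the edges. Working in the bipartite graph between $V_{j_0}$ and its neighborhood, I then carry out a bounded number of further one-sided dyadic degree-truncations --- alternating between the two sides, each discarding only an $O\!\bigl(1/\log v(F)\bigr)$ fraction of the remaining edges --- until both sides have their degrees confined to a single dyadic band; finally a uniformly random subsample of the larger side, whose concentration is guaranteed by the Chernoff bound (\Cref{chernoff}), equalizes the two side-sizes up to a constant factor while keeping every degree within a factor $2$ of its mean. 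With sufficiently conservative choices of the band widths, the resulting subgraph $F'$ is $K_{0}$-almost-regular for an absolute $K_{0} \le 10^{3}$; it has $v(F') < v(F)$, retains $e(F)/\mathrm{polylog}(v(F))$ edges on a polynomially large subset of $V(F)$, and hence satisfies $\rho(F') \ge C/3$ and $v(F') \ge n^{\Omega(1)} \ge n^{1/12}$. The \textbf{main obstacle} is precisely this extraction: choosing the heavy class and arranging the sequence of truncations and the random sample so that $F'$ comes out dense, almost regular, and only polynomially smaller all at once, with the total loss kept to a bounded factor --- this is where essentially all of the technical effort lies. The exponent $1/12$ in the statement is then just a safe round constant for the compounded vertex loss: $v(F)\gtrsim\sqrt n$ after the initial trimming, and only a further bounded power (in fact only polylogarithmic) is lost in the single rebalancing.

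\emph{Conclusion.} Because the rebalancing move is engineered to land in the $10^{3}$-almost-regular regime, the process halts as soon as it is invoked; and if it is never invoked, the first run of trimming moves already produced a $10^{3}$-almost-regular graph. In either case the output $G'$ satisfies $\Delta(G') \le 10^{3}\,\delta(G')$, $\rho(G') \ge C/3$, and $v(G') \ge n^{1/12}$ once $n$ is large in terms of $C$, which is exactly the subgraph asserted by the lemma.
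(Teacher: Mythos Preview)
The paper does not prove this lemma; it is quoted verbatim from Jiang--Seiver with no argument supplied, so there is no in-paper proof to compare against. I will therefore assess your sketch on its own terms.

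Your trimming phase is correct and standard: deleting a vertex of degree below $\tfrac14\bar d(F)$ does not decrease $\rho(F)=e(F)/v(F)^{3/2}$, and the resulting subgraph has $v(F)\ge\bar d(F)\ge 2C\sqrt n$. The gap is in the rebalancing phase. You assert that each one-sided dyadic degree-truncation ``discard[s] only an $O(1/\log v(F))$ fraction of the remaining edges,'' but this is backwards: restricting one side to a single dyadic class out of $\Theta(\log v(F))$ many \emph{keeps} at most an $O(1/\log v(F))$ fraction of the incident edges, it does not discard that fraction. After a bounded number of such truncations you retain $e(F)/(\log v(F))^{\Theta(1)}$ edges rather than a constant fraction, so $\rho(F')$ falls below $\rho(F)$ by a polylogarithmic factor and the asserted bound $\rho(F')\ge C/3$ fails for every fixed $C$ once $n$ is large. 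You yourself flag this extraction as ``where essentially all of the technical effort lies,'' but the sketch does not carry it out, and the one explicit quantitative claim you make about it is incorrect.

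The actual Erd\H{o}s--Simonovits regularization (which the cited proposition reproduces) is not a one-shot dyadic extraction. It iterates: whenever the current trimmed graph is still far from regular, one passes to a suitably chosen smaller subgraph whose density $\rho$ is \emph{strictly larger} by a factor bounded away from $1$, and then trims again. Because $\rho(F)\le\sqrt{v(F)}$ always, only boundedly many density-increasing steps are possible before almost-regularity is forced; the compounded vertex loss over those boundedly many rounds is what produces the exponent $1/12$. The crucial feature is that $\rho$ goes \emph{up}, not down, at each non-regular step --- this is exactly why the final constant can be $C/3$ rather than $C/\mathrm{polylog}(n)$. Your rebalancing moves $\rho$ in the wrong direction.
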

	
	By $n \gg C$ we refer to that $n$ is sufficiently large in terms of $C$. Write $K \eqdef 10^3, \, T \eqdef 10^3K\ell = 10^6\ell$ and consider parameters satisfying $n \gg C \gg T > \ell \ge 7$. Let $G$ be an $n$-vertex graph of average degree $d \eqdef 2e(G)/n = Cn^{1/2}$. (Notice that we can remove extra egdes.) We are going to show that $G$ contains a copy of $C_{2\ell}^{\Join}$. Due to \Cref{fact: bipartite subgraph} and \Cref{lem:JiangSeiverK-almost}, we may assume that $G$ is bipartite $K$-almost regular. The following supersaturation result shows that we can find many $4$-cycles to work with. 
	
	\begin{lemma}[{\cite[Theorem 6]{1983ES}}] \label{Supersaturation of cycles}
		Suppose $C \ge 10$. If $G$ is an $n$-vertex graph with $Cn^{3/2}$ edges, then $G$ contains at least $C^4 n^2/2$ copies of $4$-cycles. 
	\end{lemma}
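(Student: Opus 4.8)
The plan is the classical double-counting argument for supersaturation of $C_4$'s: count \emph{cherries} (paths with two edges) in two different ways and apply convexity twice. Throughout write $e \eqdef e(G) = Cn^{3/2}$, let $\bar d \eqdef 2e/n = 2Cn^{1/2}$ be the average degree, and for an unordered pair $\{x,y\}$ of vertices let $\lambda_{x,y} \eqdef |N(x) \cap N(y)|$ be its codegree. A preliminary but crucial observation is that the mere existence of such a $G$ forces $Cn^{3/2} = e \le \binom n2 < n^2/2$, hence $n > 4C^2 \ge 400$ and $\bar d > 4C^2 \ge 400$; this is exactly what makes all the lower-order corrections below negligible.

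First I would lower-bound the number $t$ of cherries of $G$. Counting by the center vertex gives $t = \sum_{w} \binom{d(w)}{2}$, so by convexity of $x \mapsto \binom x2$ together with $\sum_w d(w) = 2e$ we obtain $t \ge n\binom{\bar d}{2} = 2C^2n^2\bigl(1 - \tfrac{1}{\bar d}\bigr) \ge 1.99\,C^2 n^2$, using $\bar d \ge 400$. Next, counting cherries instead by their pair of endpoints gives $t = \sum_{\{x,y\}} \lambda_{x,y}$, the sum ranging over the $\binom n2$ unordered pairs of vertices. Since every copy of $C_4$ has exactly two ``diagonal'' pairs of opposite vertices, and a pair $\{x,y\}$ of codegree $\lambda$ is the diagonal of exactly $\binom{\lambda}{2}$ copies of $C_4$, the number $N_4$ of (unlabeled) copies of $C_4$ in $G$ equals $\tfrac12\sum_{\{x,y\}}\binom{\lambda_{x,y}}{2} = \tfrac14\bigl(\sum_{\{x,y\}}\lambda_{x,y}^2 - t\bigr)$. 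By Cauchy--Schwarz over the $\binom n2$ pairs, $\sum_{\{x,y\}}\lambda_{x,y}^2 \ge t^2/\binom n2 \ge 2t^2/n^2$, whence $N_4 \ge \tfrac14\bigl(\tfrac{2t^2}{n^2} - t\bigr) = \tfrac t4\bigl(\tfrac{2t}{n^2} - 1\bigr)$. Finally, since $t \ge 1.99\,C^2n^2$ we have $\tfrac{2t}{n^2} \ge 3.98\,C^2 \ge 398$, so $\tfrac{2t}{n^2} - 1 \ge 1.98\,\tfrac{t}{n^2}$, and therefore $N_4 \ge \tfrac{1.98\,t^2}{4n^2} \ge \tfrac{1.98\,(1.99)^2}{4}\,C^4 n^2 > \tfrac{C^4n^2}{2}$, as desired.

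There is no real obstacle here beyond the constant bookkeeping: the point is to arrange the two convexity steps so that the two $\binom n2 \approx n^2/2$ factors (one in the Cauchy--Schwarz denominator, one implicit in passing from $t$ to an average codegree) cancel correctly against the two ``$\approx$ halving'' factors coming from the two $\binom{\cdot}{2}$'s, and then to check that the final constant clears $\tfrac12$. This is comfortable because there is ample slack — the argument actually yields $N_4 = (1-o(1))\cdot 2C^4 n^2$, roughly four times the claimed bound — and the only inputs needed are convexity of $\binom x2$ (equivalently Cauchy--Schwarz) and the observation that a graph with $Cn^{3/2}$ edges must satisfy $n > 4C^2$, so that the hypothesis $C \ge 10$ forces every $1/\bar d$ and $1/\binom n2$ error term below any fixed threshold.
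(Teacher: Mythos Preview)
Your argument is correct: the two convexity steps (Jensen on $\binom{d(w)}{2}$ to lower-bound the cherry count, Cauchy--Schwarz on the codegrees to lower-bound $\sum\lambda_{x,y}^2$) together with the observation $n>4C^2$ are exactly what is needed, and your constant bookkeeping goes through. The paper does not give its own proof of this lemma---it is simply quoted from Erd\H{o}s--Simonovits~\cite{1983ES}---and the classical proof in that reference is precisely the cherry-counting argument you have written, so there is nothing further to compare.
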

	
	
	When we refer to $x_1 x_2 \cdots x_{\ell}$ as an $\ell$-cycle, we implicitly assume that the edges are $x_1x_2, \dots, x_{\ell}x_1$. For $u, v \in V(G)$, denote by $\deg(u, v)$ their \emph{codegree}, which is the number of vertices that are adjacent to both $u$ and $v$. Call a $4$-cycle $xyzw$ \emph{thin} if the codegrees of both diagonal pairs $\deg(x, z), \, \deg(y, w)$ are upper bounded by $Td^{1/2}$, and \emph{thick} otherwise. From \Cref{Supersaturation of cycles} we deduce that $G$ contains at least $cd^4$ copies of $4$-cycles, where $c \eqdef 2^{-5}$. So, there are either many thick $4$-cycles or many thin $4$-cycles in $G$. 
	
	\medskip
	
	\noindent\textbf{Case 1.} The number of thick $4$-cycles in $G$ is at least $c d^4/2$. 
	
	\medskip
	
	Without loss of generality, we assume that there are at least $cd^{4}/4$ many thick $4$-cycles $xyzw$ in $G$ satisfying $\deg(y, w) \ge Td^{1/2}$. The pigeonhole principle then shows that there is an edge $xy \in E(G)$ that is contained in $(cd^4/4)/e(G) \ge cC^2d/2 > Td$ such thick $4$-cycles. This means the number of $4$-cycles $xyzw$ with $\deg(y, w) \ge Td^{1/2}$ is at least $Td$. (Here $x, y$ are fixed vertices while $z, w$ vary.) 
	
	\centerline{
		\begin{tikzpicture}
			\clip (-1, -1.25) rectangle (9, 1.5);
			\draw[thick] (0, 0) grid (8, 1);
			\foreach \m in {0,1,2,3,4,5,6,7,8}
			\foreach \n in {0,1}
			\draw[fill = black] (\m,\n) circle (0.075);
			\node at (4, -0.75) {\textbf{\hypertarget{figthree}{Figure 3}:} The graph $P^{\boxp}_8$.}; 
		\end{tikzpicture}
	}
	
	Denote by $P_t^{\boxp} \eqdef P_t \boxp K_2$ the Cartesian product of the $t$-edge path $P_t$ and the single edge $K_2$ (see Figure~\hyperlink{figthree}{3} for an illustration). With the help of the lemma below, we are going to find a $P_{4\ell}^{\boxp}$ in $G$ first, and then finish the proof by ``building up'' a $C_{2\ell}^{\Join}$ from it. 
	
	\begin{lemma}[{\cite[Lemma 2.6]{gao_extremal_2023}}] \label{lem:p2timespt}
		Let $H$ be a bipartite graph with bipartition $(X, Y)$. If $e(H) \ge 20t|Y|$ and $\deg(x) \ge 20t|Y|^{1/2}$ holds for every $x \in X$, then $H$ contains a copy of $P_t^{\boxp}$. 
	\end{lemma}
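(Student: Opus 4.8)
The plan is to find a long ``snake'' of $4$-cycles, built one rung at a time, by tracking the set of ``reachable'' endpoint-pairs on the $X$-side. Since $P_t^{\boxp}$ consists of $t+1$ consecutive $4$-cycles sharing rungs, where each rung is an edge with both endpoints in... well, let me be careful: in $P_t^{\boxp} = P_t \boxp K_2$, one natural bipartition puts the two ``levels'' of the prism across the two colour classes only when $t$ is taken appropriately, so instead I would embed the prism so that each ``rung'' (a type-$K_2$ edge) goes between $X$ and $Y$, and the two long paths alternate between $X$ and $Y$. Concretely, the prism $P_t^{\boxp}$ is bipartite, and I want to embed it into $H$ so that all ``rung'' vertices at even positions land in $X$ and at odd positions land in $Y$ — more simply, I want to build a sequence of edges $x_0 y_0, x_1 y_1, \dots$ with $x_i \in X$, $y_i \in Y$, such that $x_i \sim y_{i-1}$ and (for the other strand) there is a common neighbour structure realizing the second path. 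The cleanest route: iteratively extend a path in the ``square'' $H^{\boxp}$-like auxiliary structure.

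First I would pass to a subgraph where degrees are controlled: greedily delete vertices of $X$ of degree less than $10t|Y|^{1/2}$; since the hypothesis says every $x \in X$ already has $\deg(x) \ge 20t|Y|^{1/2}$, this step is vacuous, but I would also want a lower bound on the number of edges surviving, and the hypothesis $e(H) \ge 20t|Y|$ guarantees $H$ is nonempty with average degree on the $Y$-side at least $20t$. The key mechanism is a breadth-first/greedy growth: I maintain a current embedded copy of $P_s^{\boxp}$ and show that as long as $s < t$ it can be extended. To extend, I look at the two current ``free endpoints'' (the last rung $x_s y_s$) and count: the number of ways to pick a new vertex $y_{s+1} \in Y \setminus (\text{used})$ adjacent to $x_s$, and then $x_{s+1} \in X \setminus (\text{used})$ adjacent to both $y_s$ and $y_{s+1}$ — i.e.\ in the common neighbourhood. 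The degree conditions give $\deg(x_s) \ge 20t|Y|^{1/2}$, so after removing the $O(s) = O(t)$ used vertices there are still $\ge 20t|Y|^{1/2} - t \ge 19t|Y|^{1/2}$ choices for $y_{s+1}$. For $x_{s+1}$, I need the common neighbourhood $N(y_s) \cap N(y_{s+1})$ to be large for some choice of $y_{s+1}$: this is where a convexity / Cauchy–Schwarz codegree count enters, using $e(H) \ge 20t|Y|$ to guarantee that the sum of codegrees $\sum_{y,y'} |N(y)\cap N(y')|$ is large, hence a typical pair $(y_s, y_{s+1})$ has codegree $\gtrsim t$, leaving room to avoid the $O(t)$ used vertices.

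More precisely, I expect the right formulation to be: by the hypotheses, the number of paths of length $2$ centred at $X$ with both endpoints in $Y$ is $\sum_{x}\binom{\deg x}{2} \ge |X| \cdot \frac{1}{2}(20t|Y|^{1/2})^2 \gtrsim t^2 |X||Y|$, while the number of pairs in $Y$ is $\le |Y|^2$; but I don't immediately get $|X| \gtrsim |Y|$. Instead I would iterate a two-step lookahead directly along the already-chosen vertices: fix $x_s$ and $y_s$; for each candidate $y' \in N(x_s)$, the pair $(y_s, y')$ has some codegree; I want the average of these codegrees (over $y' \in N(x_s) \setminus \text{used}$) to exceed the number of used vertices plus one. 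The average codegree of $y_s$ with a neighbour of $x_s$ is $\frac{1}{\deg x_s}\sum_{y' \in N(x_s)} |N(y_s) \cap N(y')| \ge \frac{1}{\deg x_s}|N(y_s) \cap N(x_s)| \cdot (\text{something})$ — hmm, the honest bound is $\sum_{y' \in N(x_s)}|N(y_s)\cap N(y')| \ge \sum_{x' \in N(y_s)} |N(x') \cap N(x_s)| \ge$ (counting the contribution $x'=x_s$) $\deg(y_s)$, which is too weak. The clean fix, and the one I would actually carry out, is to avoid needing large codegrees by building the prism along the $P_t$-direction using \emph{edges} rather than common neighbours: embed $P_t^{\boxp}$ as two vertex-disjoint paths $x_0 x_1' x_2 x_3' \cdots$ and $y_0 y_1' \cdots$ joined by rungs, where extending means finding a new vertex adjacent to the previous one on the same strand and whose ``partner'' on the other strand also extends — this is exactly a greedy walk in $H$ itself where at each step the degree lower bound $20t|Y|^{1/2} \gg t$ dominates the number of forbidden (already-used) vertices, and the only subtlety is synchronizing the two strands, which is handled by choosing the rung vertex last from a common neighbourhood that Cauchy–Schwarz forces to be large for some choice.

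The main obstacle, I expect, is exactly this synchronization of the two strands of the prism while keeping everything vertex-disjoint: a naive greedy extension of one strand may leave no legal completion of the rung. The resolution is to do a \emph{two-step} greedy extension (add two rungs at once), so that the second rung vertex can be chosen from $N(y_s)\cap N(y_{s+1})$ after $y_{s+1}$ is fixed, and to show via a counting argument (Cauchy–Schwarz on $\sum_{x \in X}\deg(x)^2$, using both $e(H)\ge 20t|Y|$ and $\deg(x)\ge 20t|Y|^{1/2}$) that one can always pick $y_{s+1}\in N(x_s)\setminus\text{used}$ with $|N(y_s)\cap N(y_{s+1})| \ge$ (number of used vertices)$+1 \ge O(t)$. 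The bookkeeping — that after $t$ extension steps at most $O(t)$ vertices on each side are used, comfortably less than the $\Omega(t|Y|^{1/2})$ or $\Omega(t)$ available at each step — is routine. Since the lemma is quoted verbatim from \cite{gao_extremal_2023}, I would in fact simply cite their proof, but the above sketch is the mechanism I would reconstruct if forced to reprove it.
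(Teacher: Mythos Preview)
The paper does not prove this lemma at all: it is quoted verbatim from \cite{gao_extremal_2023} and used as a black box, exactly as you yourself note in your final sentence. There is therefore no ``paper's own proof'' to compare your sketch against.

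That said, your sketch does not yet constitute a proof. You correctly identify the crux --- extending the prism by one rung requires finding, for the current endpoints $a_i\in X,\ b_i\in Y$, a vertex $a_{i+1}\in N(a_i)\setminus(\text{used})$ and then $b_{i+1}\in N(b_i)\cap N(a_{i+1})\setminus(\text{used})$ --- and you correctly flag that the codegree lower bound for $N(b_i)\cap N(a_{i+1})$ is the nontrivial step. But you never actually carry this step out: your attempted computations (``$\ge \deg(y_s)$, which is too weak''; ``hmm, the honest bound is\dots'') peter out, and the vague appeal to ``Cauchy--Schwarz on $\sum_x \deg(x)^2$'' is not connected to the quantity you need, namely $\sum_{y'\in N(a_i)} |N(b_i)\cap N(y')|$. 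The hypothesis $e(H)\ge 20t|Y|$ is what guarantees that the vertex $b_i\in Y$ can be taken to have large degree into $X$ (by passing to a subgraph of large minimum $Y$-degree, or by choosing the initial rung carefully), and together with $\deg(x)\ge 20t|Y|^{1/2}$ for all $x\in X$ one gets $\sum_{y'\in N(a_i)}|N(b_i)\cap N(y')|=\sum_{x\in N(b_i)}|N(x)\cap N(a_i)|$ large enough by averaging; but none of this is in your write-up. As it stands the proposal is a plausible plan with the key inequality missing, not a proof.
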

	
	Take $X \eqdef \{w \in N(x) \setminus \{y\} : \deg(y, w) \ge Td^{1/2}\}, \, Y \eqdef N(y) \setminus \{x\}$ and let $H \eqdef G[X, Y]$. By definition, every thick $4$-cycle containing $xy$ corresponds to an edge in $H$. So, the $K$-almost regularity of $G$ implies that $e(H) > Td > (80K\ell)d \ge 80\ell|Y|$. For each $w \in X$, since $\deg_H(w) \ge \deg(y, w) - 1 \ge 80\ell|Y|^{1/2}$, by \Cref{lem:p2timespt} with $t \eqdef 4\ell$ we deduce that $H$ has a copy of $P_{4\ell}^{\boxp}$, which contains two vertex-disjoint copies of $P_{2\ell-2}^{\boxp}$. By including the additional vertices $x$ and $y$, we obtain a copy of $C^{\Join}_{2\ell}$, as desired. 
	
	\medskip
	\noindent\textbf{Case 2.} The number of thin $4$-cycles in $G$ is at least $c d^4/2$. 
	\medskip
	
	We need a technical result inspired by \cite{janzer23}. Let $\cH = (V, E)$ be an $n$-vertex graph and suppose $\to$ is a binary relation (not necessarily symmetric) defined over $V$. If $|\{w \in N(v) : u \to w\}| \le \beta\deg(v)$ holds for any pair of (not necessarily distinct) vertices $u, v \in V$, then we say that $\to$ is a $\beta$-\emph{nice} binary relation on $\cH$. The following lemma is an asymmetric version of \cite[Lemma 2.5]{janzer23}. 
	
	\begin{lemma} \label{lem:goodC2k}
		Let $\ell \ge 2$ be an integer. Suppose $\cH = (V, E)$ is an $n$-vertex non-empty graph and $\to$ is a $\beta$-nice binary relation on $\cH$. If $\beta < \bigl(10^7 \ell^3 n^{1/\ell} (\log n)^4 \bigr)^{-1}$, then $\cH$ contains a homomorphic $2\ell$-cycle $x_1x_2 \cdots x_{2\ell}$ such that the relation $x_i \to x_j$ fails for all distinct pairs of indices $i, j$. 
	\end{lemma}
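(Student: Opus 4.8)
The plan is to follow the strategy behind \cite[Lemma 2.5]{janzer23}, now in the asymmetric setting, via a weighted counting argument over homomorphic $2\ell$-cycles, i.e.\ over closed walks of length $2\ell$ in $\cH$. Two harmless preliminary reductions: deleting isolated vertices and then restricting to one connected component preserves $\beta$-niceness with the same $\beta$ while not increasing $n$, so we may assume $\cH$ is connected and has an edge. Let $A$ be the adjacency matrix of $\cH$ and let $W \eqdef \operatorname{tr}(A^{2\ell})$ be the number of homomorphic $2\ell$-cycles; since $\cH$ has an edge, $W>0$. For an ordered pair of distinct positions $i,j\in\{1,\dots,2\ell\}$, let $W_{i,j}$ be the number of homomorphic $2\ell$-cycles $x_1x_2\cdots x_{2\ell}$ with $x_i\to x_j$. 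A homomorphic $2\ell$-cycle is as desired exactly when it is counted by $W$ but by none of the $W_{i,j}$, so it suffices to prove $\sum_{i\neq j}W_{i,j}<W$; since there are fewer than $(2\ell)^2$ ordered pairs, this reduces to showing, for each $i\neq j$,
\[
W_{i,j}\;\le\;\frac{10^7\ell^3 n^{1/\ell}(\log n)^4}{(2\ell)^2}\cdot\beta\cdot W ,
\]
which together with the hypothesis $\beta<\bigl(10^7\ell^3 n^{1/\ell}(\log n)^4\bigr)^{-1}$ gives $W_{i,j}<W/(2\ell)^2$ and hence $\sum_{i\neq j}W_{i,j}<W$.

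The heart of the proof is this per-pair estimate. Cutting the cyclic list of positions at $i$ and $j$ splits the $2\ell$-cycle into two arcs of lengths $a$ and $b=2\ell-a$ with $1\le a\le\ell$, so that $W_{i,j}=\sum_{u,w\,:\,u\to w}(A^a)_{uw}(A^b)_{uw}$ while $W=\sum_{u,w}(A^a)_{uw}(A^b)_{uw}=\operatorname{tr}(A^{2\ell})$. I would peel the edge of the length-$b$ walk incident to $w$: such a walk is a step from some $z\in N(w)$ followed by a length-$(b-1)$ walk, and $\beta$-niceness says that, for each fixed $z$ and each fixed source $u$, the number of admissible endpoints $w\in N(z)$ with $u\to w$ is at most $\beta\deg(z)$. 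Feeding this into a dyadic decomposition --- bucketing the pairs $(u,w)$ by the magnitude of $(A^b)_{uw}$, and partitioning the vertices into $O(\log n)$ classes of comparable degree to substitute for the passage to an almost-regular subgraph that is unavailable here --- lets one bound each bucket's contribution by the smaller of the niceness bound and a Markov-type bound on how many $w$ can carry a given walk-count. Summing the resulting geometric series and invoking a Bondy--Simonovits-type control on how concentrated the walk counts of length at most $\ell$ from a fixed vertex can be --- this is where the exponent $1/\ell$ enters, exactly as $n^{1/\ell}$ enters $\ex(n,C_{2\ell})$ --- should produce a bound of the shape $W_{i,j}\le\beta\cdot n^{1/\ell}(\log n)^{O(1)}\operatorname{poly}(\ell)\cdot W$, the four logarithmic factors accounting for the dyadic bucketing (in $(A^b)_{uw}$ and, after a further peel, in $(A^a)_{uw}$) and the degree-class partition; tracking the constants then yields the displayed estimate.

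I expect the main obstacle to be making $\beta$-niceness bite against the correct baseline $W=\operatorname{tr}(A^{2\ell})$, whose order depends heavily on $\cH$: it is $\Theta(n\,d^\ell)$ when $\cH$ resembles a high-girth $d$-regular graph (so that $d=O(n^{1/\ell})$), but $\Theta(d^{2\ell})$ for dense graphs, so no single crude estimate for $W_{i,j}$ survives across both regimes and the dyadic/convexity bookkeeping is genuinely needed to interpolate. A related difficulty is that, unlike in the classical even-cycle arguments, one cannot first pass to an almost-regular subgraph of $\cH$, since shrinking the degrees appearing in the denominator of the niceness condition can destroy $\beta$-niceness; coping with the degree irregularity through the $O(\log n)$ degree classes --- and checking that this costs only logarithmic, not polynomial, factors in $n$ --- is the delicate part.
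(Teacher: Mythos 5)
Your high-level plan — count all homomorphic $2\ell$-cycles $W = \operatorname{tr}(A^{2\ell})$, bound the ``bad'' ones $W_{i,j}$ per ordered pair, sum over the fewer than $(2\ell)^2$ pairs — is indeed the skeleton of Janzer's argument and of the paper's, and your instinct to bucket dyadically on walk counts $(A^b)_{uw}$ and play a niceness bound against a Markov bound is also the right flavor. However, the crux of your proposal is the asserted per-pair estimate
\[
W_{i,j}\;\le\;\beta\cdot n^{1/\ell}(\log n)^{O(1)}\operatorname{poly}(\ell)\cdot W,
\]
and you never prove it; you only name tools and flag the difficulties. This is a genuine gap, not a detail. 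Moreover the form you conjecture is not what the available counting machinery delivers. The paper's asymmetric counting lemma (\Cref{lem: de-cycle}) bounds the number of bad homomorphic $2k$-cycles by
\[
64\,k^{3/2} M^{1/2}\,\hom(C_{2m},G)^{\frac{1}{2(k-m)}}\hom(C_{2k},G)^{1-\frac{1}{2(k-m)}}
\]
with $M=\max\{\Delta_1 s_2,\Delta_2 s_1\}$. Since niceness gives $s_i\le\beta\Delta_i$, the dependence that comes out is $\sqrt{\beta}$, not $\beta$; and the right-hand side is a geometric mean of $\hom(C_{2m},G)$ and $\hom(C_{2k},G)$ (via the log-convexity inequality \Cref{coro: hom_cycle}, a spectral fact you do not invoke at all), not a clean multiple of $W$. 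Taking $m=0$ is what produces a factor $n^{1/(2\ell)}$ from $\hom(C_0,G)=n$ — note the exponent $1/(2\ell)$, not $1/\ell$ — and the remaining gap up to the hypothesis $\beta < (10^7\ell^3 n^{1/\ell}(\log n)^4)^{-1}$, including all the $\log n$ factors, is closed by the further bootstrapping in Janzer's Lemma 2.5, which neither you nor the paper reproduces. Your ``Bondy–Simonovits-type control'' is not a drop-in substitute for the log-convexity of cycle counts, and your comment that you ``expect the main obstacle to be making $\beta$-niceness bite against the correct baseline $W$'' correctly identifies the exact place where your sketch stops being a proof: the counting lemma only compares the bad cycles to $W^{1-1/(2\ell)}$ times an $M$-dependent quantity, so one needs an additional mechanism (a niceness-preserving densification of $\cH$, carried out in janzer23) to guarantee $W$ is large enough for the comparison to close. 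In short, you have reinvented the outline but not the two key ingredients: the precise asymmetric counting lemma with its $M^{1/2}$ and geometric-mean structure, and the log-convexity/regularization bootstrap that turns it into the absolute threshold on $\beta$ stated in the lemma.
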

	
	\Cref{lem:goodC2k} follows from almost identical arguments in the proof of \cite[Lemma 2.5]{janzer23}. To make the exposition self-contained, we shall include a sketch highlighting the differences after proving \Cref{thm: 2-de}. 
	
	\medskip
	
	Construct an auxiliary graph $\Gamma$ with $V(\Gamma) = E(G)$. For every pair of distinct edges $e_1 = xy$ and $e_2 = zw$, put $e_1e_2$ into $E(\Gamma)$ if $xyzw$ or $xywz$ forms a thin $4$-cycle in $G$. Then $e(\Gamma) \ge cd^4/2$. The idea is to find a $C_{2\ell} \odot C_{2\ell}$ in $\Gamma$, whose counterpart in the original graph $G$ contains a $C_{2\ell}^{\Join}$ subgraph. 
	
	By \Cref{fact: bipartite subgraph} and \Cref{fact: min-degree}, there is a bipartite subgraph $\cH$ of $\Gamma$ with $m \eqdef v(\cH) \le e(G) = dn/2$ and
	\[
	e(\cH) \ge e(\Gamma)/4 \ge cd^4/8, \qquad \delta(\cH) \ge e(\Gamma)/(2m) \ge cd^4/(4m). 
	\]
	From $d \ge Cn^{1/2}$ we deduce that $d = \Omega_{\ell}(m^{1/3})$, and so $e(\cH) = \Omega_{\ell}(m^{4/3}), \, \delta(\cH) = \Omega_{\ell}(m^{1/3}) = \Omega_{\ell}(d)$. 
	
	For every pair of distinct $e_1, e_2 \in V(\cH)$, we write $e_1 \sim e_2$ if and only if they share a common vertex. Observe that $\sim$ is a symmetric binary relation over $V(\cH)$.
	
	\begin{claim}\label{claim: beta good}
		The binary relation $\sim$ is $\beta$-nice on $\cH$ for some $\beta = O_{\ell}(m^{-1/6})$.
	\end{claim}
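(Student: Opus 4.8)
The plan is to check $\beta$-niceness straight from the definition. Fix two (not necessarily distinct) vertices $e_1, e_2 \in V(\cH)$, which we regard as edges of $G$, and write $e_1 = xy$, $e_2 = zw$; we must bound the number $D$ of neighbours $e_3$ of $e_2$ in $\cH$ with $e_1 \sim e_3$ by $\beta\deg_\cH(e_2)$. The structural starting point is that every neighbour $e_3$ of $e_2$ in $\Gamma$ (hence in $\cH$) is the edge opposite to $e_2$ in some thin $4$-cycle $Q$ of $G$ on vertex set $V(e_2)\cup V(e_3)$; in particular $V(e_3)\cap V(e_2)=\varnothing$. This already settles the case $e_1 = e_2$, where $D = 0$.

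For $e_1 \neq e_2$ I would reason as follows. Let $e_3$ be a neighbour of $e_2$ in $\cH$ that meets $e_1$; then $e_3$ has an endpoint $p \in \{x,y\}$, and we write $e_3 = pq$. Pick a thin $4$-cycle $Q$ witnessing $e_2e_3 \in E(\Gamma)$. In $Q$ the vertex $q$ is adjacent to exactly one of $z,w$ — call it $s'$ — while $p$ is adjacent to the other, so the pair $\{p,s'\}$ is a \emph{diagonal} of $Q$. Thinness of $Q$ then gives $\deg(p,s') \le Td^{1/2}$, and since $q \in N(p)\cap N(s')$ with $|N(p)\cap N(s')| = \deg(p,s')$, the vertex $q$ ranges over a set of size at most $Td^{1/2}$ once $p$ and $s'$ are fixed. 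Summing over the at most two choices of $p \in \{x,y\}$ and the at most two choices of $s' \in \{z,w\}$ (for pairs with $\deg(p,s') > Td^{1/2}$ no thin $4$-cycle contributes), we get $D \le 4Td^{1/2} = O_\ell(d^{1/2})$, uniformly over all pairs $e_1, e_2$.

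Finally I would combine this uniform bound with the lower bound $\deg_\cH(e_2) \ge \delta(\cH) = \Omega_\ell(d)$ recorded just before the claim (coming from $e(\cH) \ge cd^4/8$, $m \le dn/2$, and $d = Cn^{1/2}$). Dividing, $\sim$ is $\beta$-nice with $\beta = O_\ell(d^{1/2})/\Omega_\ell(d) = O_\ell(d^{-1/2})$; and since $d = \Omega_\ell(m^{1/3})$, this is $O_\ell(m^{-1/6})$, as claimed.

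The only step that I expect to need genuine care is the second one: verifying that a neighbour of $e_2$ meeting $e_1$ is pinned down, up to $Td^{1/2}$ choices, by the endpoint it shares with $e_1$ together with one of $z,w$. This rests on correctly identifying $\{p,s'\}$ as a diagonal rather than an edge of the relevant $4$-cycle (so that the thin condition applies), and on noting that the two possible cyclic orderings of $z,w$ relative to $p,q$ contribute only boundedly many cases. The remaining ingredients are the trivial $e_1=e_2$ case and the already-established degree estimates for $\cH$.
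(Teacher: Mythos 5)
Your proof is correct and takes essentially the same approach as the paper's: both count, for fixed $e_1,e_2$, the neighbours $e_3$ of $e_2$ meeting $e_1$ by choosing a vertex of $e_1$, a vertex of $e_2$ that together form a diagonal of the witnessing thin $4$-cycle, and invoking the codegree bound $Td^{1/2}$ to get $4Td^{1/2}$ such $e_3$; then dividing by $\delta(\cH) = \Omega_\ell(d)$ and using $d = \Omega_\ell(m^{1/3})$. You are merely a bit more explicit about bookkeeping (disposing of the $e_1=e_2$ case, and noting that diagonal pairs with codegree exceeding $Td^{1/2}$ contribute nothing), which the paper leaves implicit.
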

	
	\begin{poc}
		For any $e_1, e_2 \in V(\cH)$, the definition of thin $4$-cycles suggests that $e_2$ has at most $4Td^{1/2}$ neighbors $e_3 \in V(\cH)$ with $e_1 \sim e_3$. Indeed, each such neighbor $e_3$ is determined by picking one vertex $a \in e_2$, one vertex $b$ of $e_1$, and finding a thin $4$-cycle with $a, b$ being a pair of diagonal vertices. It follows from $4Td^{1/2}/\deg_{\cH}(e_1) \le 4Td^{1/2}/\delta(\cH) = O_{\ell}(d^{-1/2}) = O_{\ell}(m^{-1/6})$ that $\sim$ is $O_{\ell}(m^{-1/6})$-nice. 
	\end{poc}
	
	Call a homomorphic $2\ell$-cycle $x_1 x_2 \cdots x_{2\ell}$ in $\cH$ \emph{good} if $x_i \nsim x_j$ for any $i \ne j$, and \emph{bad} otherwise. 
	
	\begin{claim} \label{claim: goodcycle}
		Any subgraph $\cH'$ of $\cH$ with $e(\cH') \ge e(\cH)/2$ contains a good homomorphic $2\ell$-cycle. 
	\end{claim}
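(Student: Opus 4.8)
The plan is to produce the good homomorphic $2\ell$-cycle by a single application of \Cref{lem:goodC2k} with the relation $\sim$, applied not to $\cH'$ directly but to a subgraph of it that is simultaneously dense and of large minimum degree. The point is that $e(\cH') \ge e(\cH)/2$ controls only the edge count, whereas the hypothesis of \Cref{lem:goodC2k} weighs a bounded count of $\sim$-related neighbours against vertex degrees; so I would first clean $\cH'$ up using \Cref{fact: min-degree}.

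Concretely, recall $e(\cH) \ge cd^4/8$ and $v(\cH) = m \le e(G) = dn/2$. From $e(\cH') \ge e(\cH)/2 \ge cd^4/16$ and $v(\cH') \le m$, \Cref{fact: min-degree} yields a subgraph $\cH'' \subseteq \cH'$ with $e(\cH'') \ge e(\cH')/2$ and
\[
\delta(\cH'') \ \ge\ \frac{e(\cH')}{2\,v(\cH')} \ \ge\ \frac{cd^4}{32m} \ \ge\ \frac{cd^4}{16dn} \ =\ \frac{cd^3}{16n} \ =\ \frac{cC^2 d}{16},
\]
the last equality using $n = (d/C)^2$. In particular $\cH''$ is non-empty, and $m'' \eqdef v(\cH'') \le m$ is large since $m'' > \delta(\cH'') \ge cC^2 d/16$ while $d = Cn^{1/2} \to \infty$.

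Next I would check that $\sim$ is $\beta''$-nice on $\cH''$ for a small enough $\beta''$. The numerator bound from the proof of \Cref{claim: beta good} is intrinsic to $G$ and $\sim$, so it persists under restriction: for any $e_1, e_2 \in V(\cH'') \subseteq V(\cH)$, at most $4Td^{1/2}$ of the vertices $e_3 \in N_{\cH''}(e_2) \subseteq N_{\cH}(e_2)$ satisfy $e_1 \sim e_3$. Dividing by $\deg_{\cH''}(e_2) \ge \delta(\cH'') \ge cC^2 d/16$ gives that $\sim$ is $\beta''$-nice on $\cH''$ with
\[
\beta'' \ \le\ \frac{4Td^{1/2}}{cC^2 d/16} \ =\ \frac{64T}{cC^2}\,d^{-1/2} \ =\ O_\ell\bigl(d^{-1/2}\bigr) \ =\ O_\ell\bigl(m^{-1/6}\bigr),
\]
where the last equality uses $d = \Omega_\ell(m^{1/3})$ (itself a consequence of $d \ge Cn^{1/2}$ and $m \le dn/2$). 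Since $\ell \ge 7$ we have $1/\ell < 1/6$, whence $m^{-1/6} = o\bigl((m^{1/\ell}(\log m)^4)^{-1}\bigr)$ as $m \to \infty$; as $m'' \le m$, this yields $\beta'' < \bigl(10^7\ell^3 (m'')^{1/\ell}(\log m'')^4\bigr)^{-1}$ provided $n$ is large in terms of $\ell$, which holds because $n \gg C \gg T > \ell$.

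Finally I would invoke \Cref{lem:goodC2k} on the non-empty graph $\cH''$ with the $\beta''$-nice relation $\sim$: it returns a homomorphic $2\ell$-cycle $x_1x_2\cdots x_{2\ell}$ in $\cH''$ with $x_i \nsim x_j$ for all distinct $i, j$. As $\cH'' \subseteq \cH'$ and the failure of $\sim$ is a property of the $2\ell$ vertices alone (whether the corresponding edges of $G$ meet), this is a good homomorphic $2\ell$-cycle in $\cH'$, completing the argument. The crux — and the only place the hypothesis $\ell \ge 7$ is needed — is the numerical check of the third paragraph: one must see that restricting to $\cH''$ keeps the minimum degree of order $d$ and keeps the intrinsic bound $4Td^{1/2}$ on $\sim$-related neighbours, so that the niceness parameter $O_\ell(m^{-1/6})$ outruns the threshold $\Theta_\ell\bigl(m^{-1/\ell}(\log m)^{-4}\bigr)$ of \Cref{lem:goodC2k}.
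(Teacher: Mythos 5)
Your proof is correct and follows essentially the same route as the paper's: apply \Cref{fact: min-degree} to pass from $\cH'$ to a subgraph $\cH''$ of minimum degree $\Omega_\ell(d)$, observe that the intrinsic numerator $4Td^{1/2}$ from \Cref{claim: beta good} persists so $\sim$ is $O_\ell(m^{-1/6})$-nice on $\cH''$, and invoke \Cref{lem:goodC2k} using $\ell \ge 7$. The only difference is that you spell out the arithmetic that the paper compresses into ``$\beta'' = O_\ell(\beta)$''.
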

	
	\begin{poc}
		Recall that $v(\cH) = m \le dn/2$. \Cref{fact: min-degree} shows that $\cH'$ contains a subgraph $\cH''$ with 
		\[
		\delta(\cH'') \ge e(\cH')/(2m) \ge e(\cH)/(4m) = \Omega_{\ell}(d). 
		\]
		By the proof of \Cref{claim: beta good}, $\sim$ is $\beta''$-nice on $\cH''$ for some $\beta'' = O_{\ell}(\beta)$. Since $m \ge \delta(\cH) \to \infty$, we have
		\[
		\beta'' = O_{\ell}(\beta) \overset{\text{\Cref{claim: beta good}}}{=} O_{\ell}(m^{-1/6}) \le \Omega_{\ell} \bigl( m^{-1/\ell} (\log m)^{-4} \bigr) = \Bigl(10^7 \ell^3 v(\cH'')^{1/\ell} \bigl( \log v(\cH'') \bigr)^4 \Bigr)^{-1}
		\]
		because $\ell \ge 7$. Thus, \Cref{lem:goodC2k} shows that $\cH''$ (hence $\cH'$) contains a good homomorphic $2\ell$-cycle. 
	\end{poc}
	
	Recall that $\cH$ is a bipartite graph. Let $(L, R)$ be a bipartition of $\cH$. By \Cref{claim: goodcycle}, we can greedily pick good homomorphic $2\ell$-cycles $C_1, \dots, C_t$ in $\cH$ with $L \cap V(C_i) \cap V(C_j) = \varnothing$ for any distinct indices $i, j$, until $e \bigl( \cH \bigl[ L \setminus \bigcup_{i=1}^t V(C_i), R \bigr] \bigr) < e(\cH)/2$. (This infers that $e \bigl( \cH \bigl[ L \setminus \bigcup_{i=1}^{t-1} V(C_i), R \bigr] \bigr) \ge e(\cH)/2$.) The definition of $\sim$ tells us that each $C_i$ corresponds to a copy of $C_{2\ell}^{\boxp}$ in $G$. 
	
	Denote $S \eqdef L \cap \bigl( \bigcup_{i=1}^t V(C_i) \bigr)$ and ${\cH}''' \eqdef \mathcal{H}[S, R]$. Then $e({\cH}''') \ge e(\cH)/2$. 
	\vspace{-0.5em}
	\begin{itemize}
		\item For each $e \in S$, there is a unique $C_i$ containing $e$ in $\cH$. Let $D_e$ be the copy of $C_{2\ell}^{\boxp}$ in $G$ corresponding to $C_i$. Define $X_e \eqdef V(D_e)$. 
		\vspace{-0.75em}
		\item For each $e \in R$ with vertex set $V(e) = \{x, y\}$, define $X_e \eqdef V(e) = \{x, y\}$. (Set theoretically, $V(e)$ and $e$ are the same.) 
	\end{itemize}
	\vspace{-0.5em}
	Through the procedure above, we have associated a set $X_e\subseteq V(G)$ to each vertex $e$ in $V(\cH''') = S \cup R$. 
	
	For every pair of distinct $e_1, e_2 \in V(\cH''')$, we write $e_1 \to e_2$ if and only if $V(e_2)\cap X_{e_1} \ne \varnothing$. By \Cref{fact: min-degree}, we can find a subgraph $\widetilde{\cH}$ of $\cH[S, R]$ with 
	\[
	\delta(\widetilde{\cH}) \ge e(\cH''')/(2m) > e(\cH)/(4m) = \Omega_{\ell}(d). 
	\]
	
	\begin{claim} \label{claim bata good2}
		The binary relation $\to$ is $\widetilde{\beta}$-nice on $\widetilde{\cH}$ for some $\widetilde{\beta} = O_{\ell}(m^{-1/6})$.
	\end{claim}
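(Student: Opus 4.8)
The plan is to follow the proof of \Cref{claim: beta good} essentially verbatim, the one new ingredient being a uniform bound on the sizes of the associated sets $X_e$. First I would record that
\[
|X_e| \le 4\ell \qquad \text{for every } e \in V(\widetilde{\cH}) \subseteq S \cup R:
\]
if $e \in R$ then $X_e = V(e)$ has exactly two elements, and if $e \in S$ then $X_e = V(D_e)$ is the vertex set of a copy of the prism $C^{\boxp}_{2\ell}$, which has at most $4\ell$ vertices.

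Next, fix vertices $u, v \in V(\widetilde{\cH})$, not necessarily distinct, and estimate the number of $w \in N_{\widetilde{\cH}}(v)$ with $u \to w$. By definition $u \to w$ means $V(w) \cap X_u \ne \varnothing$, and every $w \in N_{\widetilde{\cH}}(v)$ is a vertex of $\cH$, hence an edge of $G$ with exactly two endpoints; therefore
\[
\bigl| \{ w \in N_{\widetilde{\cH}}(v) : u \to w \} \bigr| \le \sum_{a \in X_u} \bigl| \{ w \in N_{\widetilde{\cH}}(v) : a \in V(w) \} \bigr|,
\]
and it suffices to bound, for each fixed vertex $a$, the number of edges $w \in N_{\widetilde{\cH}}(v)$ with $a \in V(w)$ by $O_\ell(d^{1/2})$, then multiply by $|X_u| \le 4\ell$.

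For the inner count, write $v = \{x, y\}$ and $w = \{a, b\}$, where $b$ is the unknown second endpoint. Since $\widetilde{\cH}$ is a subgraph of $\cH$, the relation $vw \in E(\widetilde{\cH})$ forces $v$ and $w$ to be a pair of opposite edges of a thin $4$-cycle on the vertex set $\{x, y, a, b\}$. Such a $4$-cycle is determined by a perfect matching between $\{x, y\}$ and $\{a, b\}$, of which there are two; in each of them $a$ forms a diagonal pair with one of $x, y$, and $b$ is then forced to be a common neighbour of $a$ and that endpoint of $v$. Thinness caps the codegree of that diagonal pair by $Td^{1/2}$, so there are at most $Td^{1/2}$ admissible $b$ in each of the two cases, hence at most $2Td^{1/2}$ edges $w$ with $a \in V(w)$ altogether. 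Summing over $a \in X_u$ gives
\[
\bigl| \{ w \in N_{\widetilde{\cH}}(v) : u \to w \} \bigr| \le 4\ell \cdot 2Td^{1/2} = 8\ell T d^{1/2},
\]
a bound independent of $u$ and $v$.

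Finally, dividing by $\delta(\widetilde{\cH}) = \Omega_\ell(d)$ (established immediately before the claim) shows that $\to$ is $\widetilde{\beta}$-nice on $\widetilde{\cH}$ with $\widetilde{\beta} \le 8\ell T d^{1/2}/\delta(\widetilde{\cH}) = O_\ell(d^{-1/2})$; since $d = \Omega_\ell(m^{1/3})$, this gives $\widetilde{\beta} = O_\ell(m^{-1/6})$, as claimed. The only mildly delicate point I anticipate is the bookkeeping in the third paragraph — verifying that an endpoint of an $\cH$-neighbour of $v$ genuinely lies in a diagonal pair of a thin $4$-cycle, so that the codegree cap applies — but this is exactly the mechanism already used in \Cref{claim: beta good}, now run with the $O_\ell(1)$-sized seeds $X_e$ in place of single edges.
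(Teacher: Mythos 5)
Your argument is correct and follows the paper's proof of this claim essentially verbatim: bound the number of $\to$-related neighbours of $v$ by summing over the $O_\ell(1)$ vertices $a \in X_u$, use thinness of the $4$-cycles underlying edges of $\widetilde{\cH}$ to cap the number of edges $w \ni a$ adjacent to $v$ by $2Td^{1/2}$, and divide by $\delta(\widetilde{\cH}) = \Omega_\ell(d)$. The one place you are more explicit than the paper is in recording $|X_e| \le 4\ell$, which the paper leaves implicit in its $O_\ell(\cdot)$ notation; otherwise the two proofs coincide.
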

	
	\begin{poc}
		For any $e_1, e_2 \in V(\widetilde{\cH})$, the definition of $\to$ suggests that $e_2$ has at most $2|X_{e_1}|Td^{1/2}$ neighbors $e_3 \in V(\widetilde{\cH})$ satisfying $e_1 \to e_3$. Indeed, each such neighbor $e_3$ is determined by picking one vertex $a \in X_{e_1}$, one vertex $b$ of $e_2$, and finding a thin $4$-cycle with $a, b$ being diagonal vertices. Similar to \Cref{claim: beta good}, from $2|X_{e_1}|Td^{1/2}/\delta(\widetilde{\cH}) = O_{\ell}(d^{-1/2}) = O_{\ell}(m^{-1/6})$ we see that $\to$ is $O_{\ell}(m^{-1/6})$-nice. 
	\end{poc}
	
	By \Cref{claim bata good2}, $\to$ is $\widetilde{\beta}$-nice on $\widetilde{\cH}$ for some $\widetilde{\beta} = O_{\ell}(m^{-1/6})$. Again, since $\ell \ge 7$ and $m \to \infty$, we obtain
	\[
	\widetilde{\beta} = O_{\ell}(m^{-1/6}) \le \Omega_{\ell} \bigl( m^{-1/\ell} (\log m)^{-4} \bigr) = \Bigl(10^7 \ell^3 v(\widetilde{\cH})^{1/\ell} \bigl( \log v(\widetilde{\cH}) \bigr)^4 \Bigr)^{-1}
	\]
	It then follows from \Cref{lem:goodC2k} that $\widetilde{\cH}$ contains a homomorphic $2\ell$-cycle $C^*=x_1 x_2 \cdots x_{2\ell}$, such that $x_i \to x_j$ fails for all $i \ne j$. Pick an arbitrary $e \in V(C^*) \cap S$ in $V(\widetilde{\cH})$. Then the definitions of $\to$ and $X_e$ show that $C^*$ corresponds to a copy $D^*$ of $C_{2\ell}^{\boxp}$ in $G$. Furthermore, the two copies $D_e, D^*$ of $C_{2\ell}^{\boxp}$ intersect exactly at one edge $e \in E(G) = V(\Gamma)$ and two vertices of $e$. Therefore, the $C_{2\ell}^{\boxp}$ subgraphs $D_e, D^*$ of $G$ combine to a copy of $C^{\Join}_{2\ell}$. The proof of \Cref{thm: 2-de} is complete. 
	
	\subsection{Proof of \texorpdfstring{\Cref{lem:goodC2k}}{Lemma 3.5}}
	
	For graphs $G$ and $H$, denote by $\hom(H, G)$ the number of graph homomorphisms from $H$ to $G$. To establish \Cref{lem:goodC2k}, it suffices to follow the argument of the proof of \cite[Lemma 2.5]{janzer23} verbatim, except that we need to replace the application of \cite[Lemma 2.2]{janzer23} therein by an application of the $\ell = 0$ special case\footnote{To keep the notations consistent with the references \cite{janzer_cycle,janzer23}, we prove \Cref{lem:goodC2k} with $k$ in the place of $\ell$. } in the following asymmetric version of \cite[Lemma 2.4]{janzer_cycle}. 
	
	\begin{lemma} \label{lem: de-cycle}
		Let $k \ge 2$ and $0 \le \ell \le k-1$ be integers and let $G = (V, E)$ be a graph on $n$ vertices. Let $X_1$ and $X_2$ be subsets of $V$. Let $\to$ be a binary relation defined over $V$ such that 
		\vspace{-0.5em}
		\begin{itemize}
			\item for every $u \in V$ and $v \in X_1$, $v$ has at most $\Delta_1$ neighbors $w \in X_{2}$ and amongst them at most $s_1$ satisfy $u \to w$, and
			\vspace{-0.5em}
			\item for every $u \in V$ and $v \in X_2$, $v$ has at most $\Delta_2$ neighbors $w \in X_{1}$ and amongst them at most $s_2$ satisfy $u \to w$. 
		\end{itemize}
		\vspace{-0.5em}
		Let $M = \max\{\Delta_1 s_2, \Delta_2 s_1\}$. Then the number of homomorphic $2k$-cycles 
		\[
		(x_1, x_2, \dots, x_{2k}) \in (X_1 \times X_2 \times X_1 \times \dots \times X_2)\cup (X_2 \times X_1 \times X_2 \times \dots \times X_1)
		\]
		in $G$ such that $x_i \to x_j$ for some $i \ne j$ is at most 
		\[
		64 k^{3/2} M^{1/2} \cdot \hom(C_{2\ell}, G)^{\frac{1}{2(k-\ell)}} \hom(C_{2k}, G)^{1-\frac{1}{2(k-\ell)}}. 
		\]
	\end{lemma}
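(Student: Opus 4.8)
I would first set up the counting framework: let $N$ denote the number of homomorphic $2k$-cycles $(x_1,\dots,x_{2k})$ lying in the prescribed alternating pattern $(X_1\times X_2\times\cdots)\cup(X_2\times X_1\times\cdots)$ such that $x_i\to x_j$ for some pair $i\ne j$. By symmetry and a union bound over the $O(k^2)$ choices of the pair $\{i,j\}$ (and the two possible starting patterns), it suffices to bound, for a fixed ordered pair $(i,j)$, the number of homomorphic $2k$-cycles with $x_i\to x_j$; summing these and absorbing constants yields the $64k^{3/2}$-type factor. The key structural observation is that a closed homomorphic walk of length $2k$ with a marked chord from position $i$ to position $j$ splits into two homomorphic paths between $x_i$ and $x_j$, of lengths $j-i$ and $2k-(j-i)$; one of these lengths can be taken to be at most $k$, and this is what makes the ``short path'' carry the $\to$-relation cheaply.

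**The heart of the argument is a weighted Cauchy--Schwarz over the endpoints.** Fix the pair of endpoints $x_i = a \in X_1$ (say) and $x_j = b$. Writing $p(a,b)$ for the number of homomorphic $2\ell$-paths (i.e.\ $C_{2\ell}$ through a distinguished chord) between $a$ and $b$ inside $G$, and $q(a,b)$ for homomorphic paths of the complementary length, the total count of $C_{2k}$'s decomposes as $\sum_{a,b} p(a,b)\,q(a,b)$; here the exponent $\frac{1}{2(k-\ell)}$ in the statement comes from iterating this $k-\ell$ times or, equivalently, from a single application with the correct normalization. For the \emph{restricted} count (those with $x_i\to x_j$), I would bound the contribution of the ``short side'' path by a factor $M^{1/2}$ per step using the two hypotheses on $\to$: along an alternating path in $X_1,X_2$, at each step the number of admissible neighbors $w$ with $u\to w$ is at most $s_1$ or $s_2$, while the total branching is at most $\Delta_1$ or $\Delta_2$, so the ``$\to$-restricted'' walks are a $\sqrt{M}$-fraction (per two steps, roughly) of all walks — this is exactly where $M=\max\{\Delta_1 s_2,\Delta_2 s_1\}$ enters. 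Then Cauchy--Schwarz in the form $\sum_{a,b} (\text{restricted } p)(a,b)\cdot q(a,b) \le \bigl(\sum (\text{restricted }p)^2\bigr)^{1/2}\bigl(\sum q^2\bigr)^{1/2}$ and the identities $\sum_{a,b} p(a,b)^2 = \hom(C_{4\ell},G)$-type quantities (more precisely, one recognizes $\sum_{a,b} q(a,b)^2$ as $\hom(C_{2k},G)$ up to the chord and $\sum_{a,b} p(a,b)^2$ as $\hom(C_{2\ell},G)$ after the appropriate power) give the claimed bound. The bookkeeping of which sums reconstitute $\hom(C_{2\ell},G)$ versus $\hom(C_{2k},G)$, and with what exponents, is the part that must follow Janzer's computation line by line.

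**The main obstacle is making the asymmetric $X_1/X_2$ and the two-sided relation hypotheses interact correctly with the Cauchy--Schwarz step.** In the symmetric case of \cite[Lemma 2.4]{janzer_cycle} one has a single $\Delta$ and a single $s$, and the alternating structure is invisible; here one must track, at each step of a path, whether the next vertex lies in $X_1$ or $X_2$, and correspondingly use either the bound $\Delta_1,s_1$ or $\Delta_2,s_2$. The saving grace is that $M=\max\{\Delta_1 s_2,\Delta_2 s_1\}$ is precisely the geometric-mean quantity that survives when one pairs a ``$\Delta_1$-step'' on one side with an ``$s_2$-step'' on the other under Cauchy--Schwarz, so with $M$ defined this way the asymmetry collapses and the rest of the argument is formally identical to \cite{janzer_cycle}. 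I would therefore present the proof as: (i) reduce to a fixed chord $(i,j)$ with short side of length $\le k$; (ii) state the path-counting lemma that a $\to$-restricted homomorphic path of length $r$ between fixed endpoints, alternating between $X_1$ and $X_2$, is counted with an extra $M^{\lfloor r/2\rfloor}$-type loss relative to unrestricted paths; (iii) apply Cauchy--Schwarz exactly as in \cite{janzer_cycle} to split off $\hom(C_{2\ell},G)^{1/2(k-\ell)}$ and $\hom(C_{2k},G)^{1-1/2(k-\ell)}$; (iv) sum over the $O(k^2)$ chords and the two patterns, collecting the $64k^{3/2}$ constant. Since the problem statement explicitly allows citing everything proved earlier and the referenced Janzer lemmas are essentially verbatim adaptable, the write-up can be kept to the delta from \cite[Lemma 2.4]{janzer_cycle}, flagging only the three points above.
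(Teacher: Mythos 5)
Your high-level plan (reduce by symmetry to a distinguished chord, adapt Janzer's argument so that the asymmetric hypotheses $\Delta_1,s_1,\Delta_2,s_2$ pair into $M$) is the right starting point, but the concrete mechanisms you describe do not match how the argument actually works, and two of them are incorrect as stated. The paper's proof of \Cref{lem: de-cycle} is a \emph{dyadic level-set double count}, not a Cauchy--Schwarz over endpoints. After using the $2k$ rotations and $2$ reflections to reduce to counting cycles with $x_i\to x_1$ for some $i\in\{2,\dots,k+1\}$, one partitions by the dyadic sizes $\hom_{x_1,x_{k+2}}(P_{k-1},G)\approx 2^{r}$ and $\hom_{x_2,x_{k+2}}(P_{k},G)\approx 2^{t}$ into classes $\gamma_{r,t}$, and bounds each class in two independent ways: $\gamma_{r,t}\le \alpha_r\Delta_1 2^{t}$ (traverse the $(k-1)$-arc, branch with $\Delta_1$, traverse the $k$-arc) and $\gamma_{r,t}\le \beta_t k s_2 2^{r}$ (traverse the $k$-arc, use $ks_2$ choices for $x_1$ under the $\to$-constraint, traverse the $(k-1)$-arc). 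In particular, $M^{1/2}$ is \emph{not} a ``per-step'' or ``per-two-steps'' loss along the short arc: the relation $\to$ is used for a single chord only, so there is nothing to accumulate along the walk. The factor arises once, from choosing the dyadic threshold $q$ that balances $\sum_{t<r+q}\alpha_r\Delta_1 2^{t}$ against $\sum_{t\ge r+q}\beta_t k s_2 2^{r}$, which together with $\sum_r\alpha_r 2^{r-1}\le\hom(C_{2k-2},G)$ and $\sum_t\beta_t 2^{t-1}\le\hom(C_{2k},G)$ gives $\sqrt{\Delta_1 k s_2\,\hom(C_{2k-2},G)\hom(C_{2k},G)}\le\sqrt{kM\,\hom(C_{2k-2},G)\hom(C_{2k},G)}$.

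The second gap is the exponent $\tfrac{1}{2(k-\ell)}$. The cycle is split into arcs of lengths $k-1$ and $k$ (not $\ell$ and $2k-\ell$ as your $p,q$ decomposition suggests), so the dyadic count above only yields $\hom(C_{2k-2},G)^{1/2}\hom(C_{2k},G)^{1/2}$. The dependence on $\ell$ is then injected in one shot by the $\ell^p$-norm log-convexity inequality (\Cref{coro: hom_cycle}), namely $\hom(C_{2k-2},G)\le\hom(C_{2\ell},G)^{1/(k-\ell)}\hom(C_{2k},G)^{1-1/(k-\ell)}$, which directly produces the claimed exponents. Your ``iterate Cauchy--Schwarz $k-\ell$ times'' is not a substitute for this: without invoking the convexity inequality, neither $\hom(C_{2\ell},G)$ nor the exponent $\tfrac{1}{2(k-\ell)}$ can appear, since the dyadic argument has no access to cycles of length other than $2k-2$ and $2k$.
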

	
	To derive \Cref{lem: de-cycle}, we need the following graph homomorphism inequality, which is a direct corollary of the log-convexity of $\ell^p$-norms (applied to adjacency matrix spectrum). 
	
	\begin{lemma}[{\cite[Corollary 2.7]{janzer_cycle}}] \label{coro: hom_cycle}
		For any integers $k \ge 2$ and $0 \le \ell \le k-1$ and any graph $G$,
		\[
		\hom(C_{2k-2}, G) \le \hom(C_{2\ell}, G)^{\frac{1}{k-\ell}} \hom(C_{2k}, G)^{1-\frac{1}{k-\ell}}. 
		\]
	\end{lemma}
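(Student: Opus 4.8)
The plan is purely spectral. Let $A$ be the adjacency matrix of $G$ and let $\lambda_1, \dots, \lambda_n$ be its eigenvalues, which are real since $A$ is symmetric. The first step is to record the identity, valid for every integer $m \ge 1$,
\[
\hom(C_{2m}, G) = \operatorname{tr}(A^{2m}) = \sum_{i=1}^n \lambda_i^{2m}.
\]
Indeed, expanding the count of homomorphisms of the $(2m)$-cycle into $G$ gives $\sum_{v_1, \dots, v_{2m}} A_{v_1 v_2} A_{v_2 v_3} \cdots A_{v_{2m} v_1} = \operatorname{tr}(A^{2m})$, and orthogonal diagonalization of the real symmetric matrix $A$ turns the trace of this power into the corresponding power sum of eigenvalues. (When $\ell = 0$ one reads $\hom(C_0, G) = \operatorname{tr}(I) = n$, and $\hom(C_2, G) = \operatorname{tr}(A^2) = 2e(G)$; these conventions are consistent with the formula above and are the ones used in \cite{janzer_cycle}.)

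Next, put $\mu_i \eqdef \lambda_i^2 \ge 0$ for $i \in \{1, \dots, n\}$. Applying the identity above three times, the asserted inequality becomes
\[
\sum_{i=1}^n \mu_i^{\,k-1} \le \Bigl( \sum_{i=1}^n \mu_i^{\,\ell} \Bigr)^{\!\frac{1}{k-\ell}} \Bigl( \sum_{i=1}^n \mu_i^{\,k} \Bigr)^{\!1 - \frac{1}{k-\ell}},
\]
which is exactly the log-convexity of $\ell^p$-norms --- equivalently, the Lyapunov (H\"older) moment inequality --- applied to the nonnegative vector $(\mu_1, \dots, \mu_n)$ at the three exponents $\ell \le k-1 \le k$. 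Concretely, set $\theta \eqdef \frac{1}{k-\ell}$; since $0 \le \ell \le k-1$ we have $k - \ell \ge 1$, so $\theta \in (0, 1]$ and a one-line check gives $k - 1 = \theta \ell + (1-\theta) k$. Applying H\"older's inequality with conjugate exponents $1/\theta$ and $1/(1-\theta)$ to the splitting $\mu_i^{\,k-1} = \mu_i^{\,\theta \ell} \cdot \mu_i^{\,(1-\theta) k}$ gives precisely the displayed bound; in the boundary case $\theta = 1$, i.e.\ $\ell = k-1$, it degenerates to a trivial equality. Substituting $\mu_i = \lambda_i^2$ back and rereading the three power sums as $\hom(C_{2k-2}, G)$, $\hom(C_{2\ell}, G)$, and $\hom(C_{2k}, G)$ completes the proof.

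I do not anticipate any genuine difficulty: once the combinatorial quantities are identified with eigenvalue power sums, the statement is a textbook application of H\"older's inequality. The only points that deserve a word of care are the passage to $\mu_i = \lambda_i^2$ --- this is what supplies the nonnegativity needed to invoke H\"older/log-convexity, and it also explains why only even cycles occur --- and the bookkeeping of exponents together with the degenerate cases $\ell = 0$ and $\ell = k-1$ noted above. Accordingly I would keep the write-up to a few lines: the trace identity, the reduction to the moment inequality, and the H\"older step.
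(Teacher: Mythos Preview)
Your proof is correct and follows exactly the approach the paper indicates: the paper does not spell out a proof but states that the lemma ``is a direct corollary of the log-convexity of $\ell^p$-norms (applied to adjacency matrix spectrum),'' which is precisely the trace identity $\hom(C_{2m},G)=\operatorname{tr}(A^{2m})=\sum_i \lambda_i^{2m}$ followed by H\"older on the nonnegative sequence $\mu_i=\lambda_i^2$ that you carry out.
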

	
	\begin{proof}[Proof of \Cref{lem: de-cycle}]
		Due to \Cref{coro: hom_cycle}, it suffices to show that the number of homomorphic $2k$-cycles 
		\[
		(x_1, x_2, \dots, x_{2k}) \in \cT \eqdef X_1 \times X_2 \times X_1 \times \dots \times X_2
		\]
		in $G$ with $x_i \rightarrow x_j$ for some $i \ne j$ is upper bounded by $32 k^{3/2} M^{1/2} \cdot \hom(C_{2\ell}, G)^{1/2} \hom(C_{2k}, G)^{1/2}$. By taking into account the $2k$ rotational symmetries and $2$ reflective symmetries of $C_{2k}$, it suffices to prove that the number of homomorphic $2k$-cycles $(x_1, x_2, \dots, x_{2k}) \in \cT$ in $G$ with $x_i \rightarrow x_1$ for some index $i \in \{2, 3, \cdots, k+1\}$ is at most $8 \bigl( kM \cdot \hom(C_{2\ell}, G) \hom(C_{2k}, G) \bigr)^{1/2}$. 
		
		For $a, b \in V(G)$, let $\hom_{a, b}(P_t, G)$ be the number of homomorphic $t$-edge paths $(x_1, x_2, \dots, x_{t+1})$ in $G$ with $x_1 = a, \, x_{t+1} = b$. Denote by $\gamma_{r, t}$ the number of homomorphic $2k$-cycles $(x_1, x_2, \dots, x_{2k}) \in \cT$ with $2^{r-1} \le \hom_{x_1, x_{k+2}}(P_{k-1}, G) < 2^r$ and $2^{t-1} \le \hom_{x_2, x_{k+2}}(P_k, G) < 2^t$ such that there exists some index $i \in \{2, 3, \dots, k+1\}$ satisfying $x_i \to x_1$. Following these notations, we are supposed to prove
		\begin{equation} \label{eq:gamma_bound}
			\sum_{r, t \ge 1} \gamma_{r, t} \le 8\bigl( kM \cdot \hom(C_{2k-2}, G) \hom(C_{2k}, G)\bigr)^{1/2}. 
		\end{equation}
		The proof of \eqref{eq:gamma_bound} is parallel to that of \cite[Lemma 2.5]{janzer_cycle}. Nevertheless, we include it for completeness. 
		
		We first estimate $\gamma_{r, t}$ by counting $(k-1)$-edge paths. Denote by $\alpha_r$ be the number of homomorphic $(k-1)$-edge paths $(y_1, y_2, \dots, y_{k})$ in $G$ with $2^{r-1} \le \hom_{y_1, y_k}(P_{k-1}, G) < 2^r$. It follows that
		\begin{equation} \label{eq:alpha_bound}
			\sum_{r \ge 1} \alpha_r \cdot 2^{r-1} \le \hom(C_{2k-2}, G). 
		\end{equation}
		If $(x_1, x_2, \dots, x_{2k-1}, x_{2k}) \in \cT$ is a homomorphic $2k$-cycles with $2^{r-1} \le \hom_{x_1, x_{k+2}}(P_{k-1}, G) < 2^r$ and $2^{t-1} \le \hom_{x_2,x_{k+2}}(P_k, G) < 2^t$ with $x_i \to x_1$ for some $i \in \{2, 3, \dots, k+1\}$, then 
		\vspace{-0.5em}
		\begin{itemize}
			\item there are at most $\alpha_r$ ways to choose $(x_{k+2}, x_{k+3}, \dots, x_{2k}, x_1)$, and 
			\vspace{-0.5em}
			\item given such a choice, there are at most $\Delta_1$ ways to choose $x_2$, and 
			\vspace{-0.5em}
			\item for each of these choices, there are at most $2^t$ ways to choose $(x_3, x_4, \dots, x_{k+1})$. 
		\end{itemize}
		\vspace{-0.5em}
		So, we deduce that $\gamma_{r, t} \le \alpha_r \Delta_1 \cdot 2^t$ for every $r$ and every $t$. 
		
		We then estimate $\gamma_{r, t}$ by counting $k$-edge paths. Denote by $\beta_t$ be the number of homomorphic $k$-edge paths $(y_1, y_2, \dots, y_{k+1})$ in $G$ with $2^{t-1} \le \hom_{y_1, y_{k+1}}(P_k, G) < 2^t$. It follows that
		\begin{equation} \label{eq:beta_bound}
			\sum_{t \ge 1} \beta_t \cdot 2^{t-1} \le \hom(C_{2k}, G). 
		\end{equation}
		If $(x_1, x_2, \dots, x_{2k-1}, x_{2k}) \in \cT$ is a homomorphic $2k$-cycles with $2^{r-1} \le \hom_{x_1, x_{k+2}}(P_{k-1}, G) < 2^r$ and $2^{t-1} \le \hom_{x_2,x_{k+2}}(P_k, G) < 2^t$ with $x_i \to x_1$ for some $i \in \{2, 3, \dots, k+1\}$, then 
		\vspace{-0.5em}
		\begin{itemize}
			\item there are at most $\beta_t$ ways to choose $(x_2, x_3, \dots, x_{k+2})$, and 
			\vspace{-0.5em}
			\item given such a choice, there are $k$ candidates of $i \in \{2, 3, \cdots, k+1\}$ with $x_i \to x_1$, and 
			\vspace{-0.5em}
			\item for each of these, there are at most $s_2$ choices of $x_1$ which is adjacent to $x_2$ with $x_i \to x_1$, and 
			\vspace{-0.5em}
			\item provided everything above, there are at most $2^r$ choices for $(x_{k+2}, x_{k+3}, \ldots, x_{2k}, x_1)$. 
		\end{itemize}
		\vspace{-0.5em}
		So, we deduce that $\gamma_{r, t} \le \beta_t k s_2 \cdot 2^r$ for every $r$ and every $t$. 
		
		We are ready to establish \eqref{eq:gamma_bound}. Let $q$ be the unique integer with $\lambda \le 2^q < 2\lambda$, where
		\[
		\lambda \eqdef \biggl( \frac{ks_2 \cdot \hom(C_{2k}, G)}{\Delta_1 \cdot \hom(C_{2k-2}, G)} \biggr)^{1/2}. 
		\]
		Dealing with the cases $t < r + q$ and $t \ge r + q$ separately, with the help of \eqref{eq:alpha_bound} and \eqref{eq:beta_bound} we obtain
		\begin{align*}
			\sum_{r, t \ge 1} \gamma_{r, t} &= \sum_{\substack{r, t \ge 1 \\ t < r + q}} \gamma_{r, t} + \sum_{\substack{r, t \ge 1 \\ t \ge r + q}} \gamma_{r, t} = \sum_{\substack{r, t \ge 1 \\ t < r + q}} \alpha_r \Delta_1 \cdot 2^t + \sum_{\substack{r, t \ge 1 \\ t \ge r + q}} \beta_t k s_2 \cdot 2^r \\
			&< \sum_{r \ge 1} \Delta_1 \alpha_r \cdot 2^{r+q} + \sum_{t \ge 1} ks_2\beta_t \cdot 2^{t-q+1} \\
			&< 2\Delta_1 \cdot \hom(C_{2k-2}, G) \cdot 2\lambda + 4ks_2 \cdot \hom(C_{2k, G}) \cdot \lambda^{-1} \\
			&= 8\bigl( kM \cdot \hom(C_{2k-2}, G) \hom(C_{2k}, G)\bigr)^{1/2}. 
		\end{align*}
		This deduces \eqref{eq:gamma_bound}, the proof of \Cref{lem: de-cycle} is complete. 
	\end{proof}
	
	\section*{Acknowledgment}
	
	We thank the two anonymous referees for their careful reading of our manuscript and for their many valuable suggestions, which significantly improved the writing. 
	
	\bibliographystyle{abbrv}
	\bibliography{turan_merge}

\end{document}